\documentclass[11pt,reqno]{amsart}

\setlength{\parindent}{0pt} \setlength{\textwidth}{5.8in}
\setlength{\oddsidemargin}{0.4in}
\setlength{\evensidemargin}{0.4in}
\parskip = 4pt

\newtheorem{proposition}{Proposition}[section]

\newtheorem{corollary}[proposition]{Corollary}
\newtheorem{theorem}[proposition]{Theorem}

\theoremstyle{definition}
\newtheorem{definition}[proposition]{Definition}
\newtheorem{example}[proposition]{Example}
\newtheorem{examples}[proposition]{Examples}
\newtheorem{remark}[proposition]{Remark}
\newtheorem{remarks}[proposition]{Remarks}

\newcommand{\thlabel}[1]{\label{th:#1}}
\newcommand{\thref}[1]{Theorem~\ref{th:#1}}
\newcommand{\selabel}[1]{\label{se:#1}}
\newcommand{\seref}[1]{Section~\ref{se:#1}}

\newcommand{\prlabel}[1]{\label{pr:#1}}
\newcommand{\prref}[1]{Proposition~\ref{pr:#1}}
\newcommand{\colabel}[1]{\label{co:#1}}
\newcommand{\coref}[1]{Corollary~\ref{co:#1}}
\newcommand{\relabel}[1]{\label{re:#1}}
\newcommand{\reref}[1]{Remark~\ref{re:#1}}
\newcommand{\exlabel}[1]{\label{ex:#1}}
\newcommand{\exref}[1]{Example~\ref{ex:#1}}
\newcommand{\delabel}[1]{\label{de:#1}}
\newcommand{\deref}[1]{Definition~\ref{de:#1}}
\newcommand{\eqlabel}[1]{\label{eq:#1}}
\newcommand{\equref}[1]{(\ref{eq:#1})}

\def\ot{\otimes}

\def\RR{{\mathbb R}}

\def\NN{{\mathbb N}}

\newcommand{\Cc}{\mathcal{C}}

\newcommand{\Mm}{\mathcal{M}}

\def\*C{{}^*\hspace*{-1pt}{\Cc}}
\def\text#1{{\rm {\rm #1}}}

\input xy
\xyoption {all} \CompileMatrices

\usepackage{amssymb}
\usepackage{color,amssymb,graphicx,amscd,amsmath,dsfont,stmaryrd,xcolor,comment}
\usepackage[pagebackref,colorlinks,urlcolor=blue,linkcolor=blue,citecolor=blue]{hyperref}

\begin{document}

\title[Universal constructions for Poisson algebras]
{Universal constructions for Poisson algebras. Applications}

\author{A. L. Agore}
\address{Vrije Universiteit Brussel, Pleinlaan 2, B-1050 Brussels, Belgium}
\address{Max Planck Institut f\"{u}r Mathematik, Vivatsgasse 7, 53111 Bonn, Germany}
\address{Simion Stoilow Institute of Mathematics of the Romanian Academy, P.O. Box 1-764, 014700 Bucharest, Romania}
\email{ana.agore@vub.be and ana.agore@gmail.com}

\author{G. Militaru}
\address{Faculty of Mathematics and Computer Science, University of Bucharest, Str.
Academiei 14, RO-010014 Bucharest 1, Romania}
\address{Simion Stoilow Institute of Mathematics of the Romanian Academy, P.O. Box 1-764, 014700 Bucharest, Romania}
\email{gigel.militaru@fmi.unibuc.ro and gigel.militaru@gmail.com}

\thanks{This work was supported by a grant of the Ministry of Research, Innovation and Digitization, CNCS/CCCDI --
UEFISCDI, project number PN-III-P4-ID-PCE-2020-0458, within PNCDI III}

\subjclass[2020]{17B63, 16T05, 16T10, 16W20, 16W50}
\keywords{Poisson algebras, universal constructions, automorphisms group, gradings}

\maketitle

\begin{abstract}
We introduce the \emph{universal algebra} of two Poisson algebras $P$ and $Q$ as a commutative algebra $A:={\mathcal P} (P, \, Q )$ satisfying a certain universal property. The universal algebra is shown to exist for any finite dimensional Poisson algebra $P$ and several of its applications are highlighted. For any Poisson $P$-module $U$, we construct a functor $U \ot - \colon {}_{A} \Mm \to {}_Q{\mathcal P} \Mm$ from the category of $A$-modules to the category of Poisson $Q$-modules which has a left adjoint whenever $U$ is finite dimensional. Similarly, if $V$ is an $A$-module, then there exists another functor $ - \ot V \colon {}_P{\mathcal P} \Mm \to {}_Q{\mathcal P} \Mm$ connecting the categories of Poisson representations of $P$ and $Q$ and the latter functor also admits a left adjoint if $V$ is finite dimensional. If $P$ is $n$-dimensional, then ${\mathcal P} (P) := {\mathcal P} (P, \, P)$ is the initial object in the category of all commutative bialgebras coacting on $P$. As an algebra, ${\mathcal P} (P)$ can be deescribed as the quotient of the polynomial algebra $k[X_{ij} \, | \, i, j = 1, \cdots, n]$ through an ideal generated by $2 n^3$ non-homogeneous polynomials of degree $\leq 2$. Two applications are provided. The first one describes the automorphisms group ${\rm Aut}_{\rm Poiss} (P)$ as the group of all invertible group-like elements of the finite dual ${\mathcal P} (P)^{\rm o}$. Secondly, we show that for an abelian group $G$, all $G$-gradings on $P$ can be explicitly described and classified in terms of the universal coacting bialgebra ${\mathcal P} (P)$.
\end{abstract}

\section*{Introduction}
Introduced in Hamiltonian mechanics as the dual of the category of classical mechanical systems, Poisson algebras play an important role
in the study of quantum groups, differential geometry, noncommutative geometry, integrable systems, quantum field theory or vertex operator algebras (see \cite{crainic, dr, gra2013, Hue, Kon, LPV, vanB}). Poisson algebras can be thought of as the algebraic counterpart of Poisson manifolds which are smooth manifolds $M$ whose commutative algebra $C^{\infty} (M, \, \RR)$ of real smooth functions is endowed with a Lie bracket $[-, -]$ satisfying the Leibniz rule, i.e. $C^{\infty} (M, \, \RR)$ is a Poisson algebra.

In this paper we introduce and study some universal objects for Poisson algebras and highlight their main applications having as source of inspiration the previous work of Sweedler \cite{Sw}, Manin \cite{Manin} and Tambara \cite{Tambara} for Hopf algebra (co)actions on associative algebras. From a categorical point of view, the existence of universal objects with a certain property, for a given category $\mathcal{C}$ can shed some light on the structure of the category $\mathcal{C}$ itself. In particular, the existence and description of universal objects (groups or "group-like objects" such as Lie groups, algebraic groups, Hopf algebras, groupoids or quantum groupoids, etc.) which act or coact on a fixed object $\mathcal{O}$ in a certain category $\mathcal{C}$ has often various applications in many areas of mathematics. An elementary but illuminating example is the following: let $\mathcal{O}$ be a given object in a certain category $\mathcal{C}$ and consider the category ${\rm ActGr}_{\mathcal{O}}$ of all groups that \emph{act} on $\mathcal{O}$, i.e. the objects in ${\rm ActGr}_{\mathcal{O}}$ are pairs $(G, \, \varphi)$ consisting of a discrete group $G$ and a morphism of groups $\varphi : G \to {\rm Aut}_{\mathcal{C}} (\mathcal{O})$, where ${\rm Aut}_{\mathcal{C}} (\mathcal{O})$ denotes the automorphisms group of the object $\mathcal{O}$ in $\Cc$. Then the category ${\rm ActGr}_{\mathcal{O}}$ has a final object, namely $\bigl({\rm Aut}_{\mathcal{C}} (\mathcal{O}), \, {\rm Id} \bigl)$. Now, if we replace the discrete groups that act on the fixed object $\mathcal{O}$ in $\mathcal{C}$, by some other "group-like objects" from a certain more sophisticated category $\mathcal{D}$ (for instance, Lie groups, algebraic groups, Hopf algebras, etc.) which (co)act on $\mathcal{O}$ and if moreover we ask the (co)action to preserve the algebraic, differential or topological structures which might exist on $\mathcal{O}$, then things become very complicated. Indeed, the first obstacle we encounter is the fact that ${\rm Aut}_{\mathcal{C}} (\mathcal{O})$ might not be an object \emph{inside} the category $\mathcal{D}$ anymore. However, even in this complicated situation, it is possible for the above result to remain valid but, however, the construction of the final object will be far more complicated. Furthermore, it is to expect that, if it exists, this final object will contain important information on the entire automorphisms group of the object $\mathcal{O}$. To the best of our knowledge, the first result in this direction was proved by Sweelder \cite[Theorem 7.0.4]{Sw} in the case where $\mathcal{C}$ is the category of associative algebras and $\mathcal{D}$ is the category of bialgebras: if $A$ is a fixed associative algebra then the category of all bialgebras $H$ that \emph{act} on $A$ (i.e. $A$ is an $H$-module algebra) has a final object ${\rm M} (A)$, called by Sweedler the \emph{universal measuring bialgebra} of $A$. The dual situation of \emph{coactions} of bialgebras on a fixed algebra $A$, was first considered in the case when $\mathcal{C}$ is the category of graded algebras by Manin \cite{Manin} for reasons related to non-commutative geometry, and in the general case by Tambara \cite{Tambara}. If $A$ is an associative algebra, necessarily finite dimensional this time around, then the category of all bialgebras that coact on $A$ (i.e. $A$ is an $H$-comodule algebra) has an intial object $a(A)$. The results have been extended in recent years to the setting of bialgebroids coactions in \cite{Aless1, Aless2}. Furthermore, the usual automorphisms group ${\rm Aut}_{\rm Alg} (A)$ of $A$ is indeed recovered as the group of all invertible group-like elements of the finite dual $a (A)^{\rm o}$ \cite[Theorem 2.1]{mil} and $a (A)^{\rm o}$ is just Sweedler's final object in the category of all bialgebras that act on $A$ \cite[Remark 1.3]{Tambara}. The two results above remains valid if we take the category of Hopf algebras instead of bialgebras: in particular, the Hopf envelope of $a(A)$, denoted by $\underline{{\rm aut}} (A)$, is called in non-commutative geometry the \emph{non-commutative symmetry group} of $A$ \cite{theo} and its description is a very complicated matter. The existence and description of these universal (co)acting bialgebras/Hopf algebras has been considered recently in \cite{AGV2} in the context of $\Omega$-algebras. The duality between Sweedler's and Manin-Tambara's objects has been extended to this general setting and necessary and sufficient conditions for the existence of the universal coacting bialgebras/Hopf algebras, which roughly explains the need for assuming finite-dimensionality in Manin-Tambara's constructions, are given. 
Note that the existence of the universal algebra $A:={\mathcal P} (P, \, Q )$ of two Poisson algebras $P$ and $Q$  as introduced in the present paper (\deref{defcheie}) does not follow from \cite{AGV2} as $A$ is assumed to be commutative and universal only among commutative comeasuring algebras.

Furthermore, universal coacting objects for Poisson algebras have also been considered in \cite{aa2019} but from a different perspective, leading to entirely different constructions. We only point out that in \cite{aa2019}, the universal coacting object considered is actually a Poisson Hopf algebra while the one constructed in this paper is just a commutative algebra without any Lie structure on it. For more background on the importance and the applications of universal bialgebras/Hopf algebras in various areas of mathematics we refer to \cite{AGV1, Aless1, Aless2, alex1, alex2, HWWW, Huang}.

The paper is organized as follows. \deref{defcheie} introduces the key object of our work, namely the \emph{universal algebra} of two Poisson algebras $P$ and $Q$, as a pair $\bigl( {\mathcal P} (P, \, Q ), \, \eta \bigl)$ consisting of a commutative algebra $A:={\mathcal P} (P, \, Q )$ and a Poisson algebra homomorphism
$\eta \colon Q \to P \ot {\mathcal P} (P, \, Q)$ satisfying a certain universal property. \thref{adj} proves that if $P$ is finite dimensional, then the universal algebra ${\mathcal P} (P, \, Q )$ of $P$ and $Q$ exists and its explicit construction is provided. This result has two important consequences: as proved in \thref{tensor_f}, for a fixed
Poisson $P$-module $U$ there exists a canonical functor $U \ot - \colon {}_{A} \Mm \to {}_Q{\mathcal P} \Mm$ from the category of usual $A$-modules (i.e. representations of the associative algebra $A$) to the category of Poisson $Q$-modules (i.e. Poisson representations of $Q$) and moreover, if $U$ is finite dimensional this functor has a left adjoint (\thref{adjoint_tens}). Secondly, if $V$ is an $A$-module, then there exists a canonical functor $ - \ot V \colon {}_P{\mathcal P} \Mm \to {}_Q{\mathcal P} \Mm$ connecting the categories of Poisson modules over $P$ and $Q$ and, furthermore, if $V$ is finite dimensional then the aforementioned functor has a left adjoint (\thref{adjoint_tens_2}). These results provide answers, at the level of Poisson algebras, to the following general question: \emph{if $\mathcal{O}_1$ and $\mathcal{O}_2$ are two mathematical objects (not necessary in the same category), is it possible to construct "canonical functors" between the representation categories ${\rm Rep}(\mathcal{O}_1)$ and ${\rm Rep}(\mathcal{O}_2)$ of the two objects?}

In \seref{sect3} we consider three more applications of our constructions. For an $n$-dimensional Poisson algebra $P$, we denote ${\mathcal P} (P) := {\mathcal P} (P, \, P)$ and we construct ${\mathcal P} (P)$ as the quotient of the polynomial algebra $k[X_{ij} \, | \, i, j = 1, \cdots, n]$ through an ideal generated by $2 n^3$ non-homogeneous polynomials of degree $\leq 2$. ${\mathcal P} (P)$ has a canonical bialgebra structure and \thref{univbialg} shows that ${\mathcal P} (P)$ is the \emph{initial object} of the category ${\rm CoactBialg}_P$ of all commutative bialgebras coacting on $P$ and, for this reason, we call it the \emph{universal coacting bialgebra} of $P$. As in the case of Lie \cite{am20} or associative algebras \cite{mil}, the universal bialgebra ${\mathcal P} (P)$ has two important applications, which provide the theoretical answer for Poisson algebras, of the following open questions: \emph{(1) Describe explicitly the automorphisms group of a given Poisson algebra $P$; (2) Describe and classify all $G$-gradings on $P$ for a given abelian group $G$}. More precisely, \thref{automorf} proves that there exists an isomorphism of groups between the group of all Poisson automorphisms of $P$ and the group of all invertible group-like elements of the finite dual ${\mathcal P} (P)^{\rm o}$. The second application is given in \thref{nouaclas}: for an abelian group $G$, all $G$-gradings on a finite dimensional Poisson algebra $P$ are described and classified in terms of bialgebra homomorphisms ${\mathcal P} (P) \to k[G]$. By taking Takeuchi's commutative Hopf envelope of ${\mathcal P} (P)$, we obtain that the category ${\rm CoactHopf}_P$ of all commutative Hopf algebras coacting on $P$ has an initial object ${\mathcal H} (P)$ (\coref{unihopf}). It is reasonable to hope that ${\mathcal H} (P)$ will play the role of a non-commutative symmetry group of the Poisson algebra $P$. This expectation is based on the fact that the concept of Poisson $H$-comodule algebra which we are dealing with, is the algebraic counterpart of the action of an algebraic groups on an affine Poisson variety \cite[Example 2.20]{gue}.

\section{Preliminaries}\selabel{prel}
All vector spaces, (bi)linear maps, unadorned tensor products, associative, Lie or Poisson
algebras and so on are over an arbitrary field $k$. Throughout, $\delta_{s, 1}$
will stand for Kronecker's symbol. A Poisson algebra is a vector space $P$ which admits both an (non-necessarily unital) associative
commutative algebra and a Lie algebra such that for all $x$, $y$, $z
\in P$ we have:
\begin{equation}\eqlabel{1}
[x,\, yz] = [x,\,y]\,z + y\, [x,\, z].
\end{equation}
A morphism of two Poisson algebras $P_1$ and $P_2$ is a linear map
$f\colon P_{1} \to P_{2}$ which is both an algebra homomorphism as well as a Lie
algebra homomorphism; if $P_1$ and $P_2$ are unital Poisson algebras then a Poisson homomorphism will be assumed to preserve units. We denote by ${\rm Aut}_{{\rm Poiss}} (P)$ the automorphisms group of a Poisson algebra $P$.

Let $P$ be a Poisson algebra. A (left) \emph{Poisson $P$-module} \cite{am15, umb} is a vector space $V$ equipped with two bilinear
maps $\triangleright \colon P \times V \to V$ and $\rightharpoonup \colon P
\times V \to V$ such that $(V, \, \triangleright)$ is a left
$P$-module, $(V, \, \rightharpoonup)$ is a left Lie $P$-module
satisfying the following two compatibility conditions for all $a$,
$b\in P$ and $x\in V$:
\begin{eqnarray}
&& (ab) \rightharpoonup  x =  a \triangleright (b \rightharpoonup x) + b \triangleright (a \rightharpoonup x) \eqlabel{Pmod1}\\
&&[a,\, b] \triangleright x = a \rightharpoonup (b \triangleright x) - b \rightharpoonup (a \triangleright
x) \eqlabel{Pmod2}
\end{eqnarray}
We denote by ${}_P{\mathcal P} \Mm$ the category of Poisson $P$-modules having as morphisms all linear maps which are compatible with both actions.

\begin{remarks} \relabel{freeposs}
1. The category ${}_P{\mathcal P} \Mm$ of Poisson $P$-modules is equivalent to the category
of usual left $P^e$-modules (\cite[Corollary 1]{umb}), where $P^e$ is the universal enveloping algebra of $P$ as constructed there. In particular, for any set $S$
we denote by $(P^e)^{(S)}$, the free $P^e$-module generated by $S$, which is the free Poisson $P$-module generated by $S$. Any quotient $(P^e)^{(S)}/ N$
through a Poisson submodule $N$ generated by a system of generators $R$ is called the free Poisson $P$-module generated by $S$ and the relations $R$.

2. A \emph{representation} of a Poisson algebra $P$ on a vector space $V$ \cite[Remarks 2.9]{am15} is a pair $(\psi, \, \varphi)$ consisting
of an algebra map $\psi : P \to {\rm End}_k (V)$, a Lie algebra map $\varphi : P \to {\rm gl}_k(V)$ such that for any $a$,
$b\in P$:
$$
\varphi (ab) = \psi(a) \circ \varphi(b) + \psi(b) \circ \varphi(a), \quad
\psi ( [a, b] ) = \varphi(a) \circ \psi (b) - \varphi (b) \circ \psi (a).
$$
The concepts of a Poisson $P$-module structure on $V$ and a representation of $P$ on $V$ are obviously equivalent.
\end{remarks}

We shall denote by ${\rm Poiss}_k$, ${\rm Poiss}_k^1 $ and ${\rm
ComAlg}_k$ the categories of Poisson, unital Poisson and respectively
unital commutative associative algebras over $k$. Furthermore, the category of
commutative bialgebras (resp. Hopf algebras) is denoted by ${\rm ComBiAlg}_k$ (resp. ${\rm ComHopf}_k$). For a coalgebra $C$ we denote by $G(C)$ the set of group like elements of $C$, i.e.  $G(C):= \{x\in C \, | \, \Delta (x) = x \ot
x \,\, {\rm and } \,\, \varepsilon(x) = 1 \}$. If $B$ is a bialgebra, then $G(B)$ is a monoid with respect to the multiplication on $B$. Throughout, for a bialgebra $B$, we denote by $B^{\rm o}$ its finite dual. Recall that if $H$ and $L$ are two bialgebras then the abelian
group ${\rm Hom}_k \, (H, \, L)$ is an associative algebra under the convolution product \cite{Sw}:
$(\theta_1 \star \theta_2) (h) := \sum \, \theta_1 (h_{(1)})
\theta_2 (h_{(2)})$, for all $\theta_1$, $\theta_2 \in {\rm Hom}_k \,
(H, \, L)$ and $h\in H$.

If $H$ is a commutative bialgebra (or a Hopf algebra), then a Poisson algebra $P$ is called a \emph{right Poisson $H$-comodule algebra} \cite{bal} (we also say
that \emph{$H$ coacts} on $P$) if there exists $\rho_P : P \to P \ot H$ a Poisson algebra map (the Poisson algebra structures on
$P\ot H$ are given by \equref{2} below) that is also a right $H$-comodule stucture on $P$. If $(P, \, \rho_P)$ is a right Poisson $H$-comodule algebra, then
the subalgebra of coinvariants $P^{{\rm co} (H)} := \{p \in P \, | \, \rho_P (p) = p \ot 1_H\}$ is a Poisson subalgebra of $P$. For a fixed Poisson algebra $P$ we denote by ${\rm CoactBialg}_P$ (resp. ${\rm CoactHopf}_P$) the category of all commutative bialgebras (resp. Hopf algebras) coacting on $P$. That is, the objects are all pairs $(H, \rho_P)$ consisting of a commutative bialgebra (resp. Hopf algebra) $H$ together with a structure of a right Poisson $H$-comodule algebra $\rho_P \colon P \to P \ot H$ while morphisms $f \colon (H, \rho_P) \to (H', \rho'_P)$ in ${\rm CoactBialg}_P$ are bialgebra maps $f: H \to H'$ such
that $({\rm Id}_P \ot f ) \circ \rho_P = \rho'_P$.

\begin{examples} \exlabel{nouexnev}
1. The first basic example of a Poisson $H$-comodule algebra is the one induced  by $G$-graded Poisson algebras. Recall that,
given an abelian group $G$ and a Poisson algebra $P$, a \emph{$G$-grading} on $P$ is a vector space
decomposition $P = \oplus_{\sigma \in G} \,
P_{\sigma}$ such that $P_{\sigma} P_{\tau} \subseteq P_{\sigma \tau}$ and $\left [P_{\sigma}, \,
P_{\tau} \right] \subseteq P_{\sigma \tau}$, for all $\sigma$, $\tau \in G$. Two $G$-gradings $P = \oplus_{\sigma \in G}
\, P_{\sigma} = \oplus_{\sigma \in G} \,
P_{\sigma} ^{'}$ on $P$ are called
\emph{isomorphic} if there exists $w \in {\rm Aut}_{{\rm Poiss}}
(P)$ an automorphism of $P$ such that $w
(P_{\sigma}) = P_{\sigma} ^{'}$, for
all $\sigma \in G$. Let $k[G]$ be the group algebra of $G$. By extending a well known result in Hopf algebra theory
(\cite[Excercise 3.2.21]{radford}) one can easily see that there is a bijection between the set of all right Poisson $k[G]$-comodule
structures $\rho \colon P \to P \ot k[G]$ on the Poisson algebra
$P$ and the set of all $G$-gradings on $P = \oplus_{\sigma \in G} \,
P_{\sigma}$. The bijection is given such that $x_{\sigma} \in
P_{\sigma}$ if and only if $\rho (x_{\sigma}) = x_{\sigma}
\ot \sigma$, for all $\sigma \in G$.

2. The second example of a Poisson comodule algebra comes from algebraic geometry \cite[Example 2.20]{gue}:
if $V$ is an affine Poisson variety (i.e. the coordinate ring $k[V]$ of $V$ is a Poisson algebra) and $G$ is an algebraic group
acting on $V$ via automorphisms of Poisson varieties, then $k[V]$ is a Poisson $k[G]$-comodule algebra.
\end{examples}

For further details concerning the study of Poisson algebras see \cite{crainic, LPV} and the references therein and for undefined concepts on category theory (resp. Hopf algebras) we refer the reader to \cite{mlane} (resp. \cite{radford, Sw}).

\section{The universal algebra of two Poisson algebras}\selabel{sect2}

Before introducing the main characters of this paper we make the following key observation:
if $P$ is a Poisson algebra and $A$ is a commutative associative algebra then
$P\ot A $ is a Poisson algebra. The associative algebra structure and the Lie bracket are defined  as follows
for all $x$, $y \in P$ and $a$, $b\in A$:
\begin{eqnarray}
(x\ot a) \, (y\ot b) = xy \ot ab, \qquad
\left[ x\ot a, \, y\ot b \right] = \left[x, \, y\right] \ot ab. \eqlabel{2}
\end{eqnarray}
Indeed, having in mind that $A$ is a commutative associative algebra, we have:
\begin{eqnarray*}
&& \left[ x \ot a,\, \underline{(y \ot b)(z \ot c)} \right] \stackrel{\equref{2}}
= \underline{\left[ x \ot a,\, yz \ot bc \right]}  \stackrel{\equref{2}} =  \underline{\left[ x,\, yz\right]} \ot abc\\
&& \stackrel{\equref{1}} =  [x,\,y]\,z \, \ot \,abc + y\, [x,\, z]\, \ot \,abc  \stackrel{\equref{2}} = (\underline{\left[x,\,y\right] \, \ot \,ab)(}z \,\ot\, c) + (y\, \ot \,b)(\underline{\left[x,\, z\right]\, \ot \,ac})\\
&& \stackrel{\equref{2}} = \left[ x\ot a, \, y\ot b \right] (z\, \ot \, c) + (y\, \ot \,b)\left[ x\ot a, \, z\ot c \right]
\end{eqnarray*}
for all $x$, $y$, $z \in P$ and $a$, $b$, $c\in A$,
i.e. \equref{1} holds for $P \ot A$. Furthermore, if $f \colon A \to B$ is an algebra map then ${\rm Id}_{P} \, \ot f :
P \ot A \to P \ot B$ is a morphism of Poisson algebras. To conclude, given a Poisson algebra $P$, assigning $A
\mapsto P \ot A$ defines a functor $P \ot - \, : {\rm ComAlg}_k \to {\rm Poiss}_k$ from the
category of commutative algebras to the category of Poisson algebras. With this remark in hand we can now introduce the following concept:

\begin{definition} \delabel{defcheie}
Let $P$ and $Q$ be two Poisson algebras. The \emph{universal algebra of
$P$ and $Q$} is a pair $\bigl( {\mathcal P} (P, \, Q ), \, \eta \bigl)$ consisting of a commutative algebra
${\mathcal P} (P, \, Q ) \in {\rm ComAlg}_k $ and a Poisson algebra homomorphism
$\eta \colon Q \to P \ot {\mathcal P} (P, \, Q)$ satisfying the following universal property:
for any commutative algebra $A$ and any Poisson algebra homomorphism $g \colon Q \to P \ot A$
there exists a unique algebra homomorphism $\theta \colon {\mathcal P} (P, \, Q ) \to A$ such that the following diagram is commutative:
\begin{eqnarray} \eqlabel{diagrama_univ}
\xymatrix {& Q \ar[r]^-{\eta} \ar[dr]_{g}
& {  P \ot {\mathcal P} (P,
\, Q)} \ar[d]^{ {\rm Id}_{P} \ot \theta }\\
& {} & {P \ot A}}\quad {\rm i.e.}\,\,\, g = \bigl( {\rm Id}_{P} \ot \theta \bigl) \circ \, \eta
\end{eqnarray}
If $Q = P$ then ${\mathcal P} (P) := {\mathcal P} (P, \, P )$ will be called
the \emph{universal coacting bialgebra on $P$}\footnote{The terminology is explained by \thref{univbialg} below.}.
\end{definition}

The universal algebra of two Poisson algebras $P$ and $Q$, if exists, it is unique up to an isomorphism of algebras. In what follows we prove that if $P$
is a finite dimensional Poisson algebra and $Q$ an arbitrary Poisson algebra, then the universal algebra ${\mathcal P} (P, \, Q )$ of $P$ and $Q$ exists and we will provide its explicit construction. We formulate this result in terms of adjoint functors, as the Poisson algebra version of \cite[Theorem 1.1]{Tambara}.

\begin{theorem}\thlabel{adj}
Let $P$ be a finite dimensional Poisson algebra. Then the functor $P \ot - \, \colon {\rm ComAlg}_k \to {\rm
Poiss}_k$ has a left adjoint ${\mathcal P} (P, \, - ) : {\rm Poiss}_k \to {\rm ComAlg}_k$. Furthermore, if $Q$ is an arbitrary Poisson algebra, then
${\mathcal P} (P, \, Q )$ is the universal algebra of $P$ and $Q$.
\end{theorem}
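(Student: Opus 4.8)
The plan is to realise $\mathcal{P}(P,Q)$ by generators and relations and then read off both the universal property and the adjunction from the construction. Fix a basis $e_1, \dots, e_n$ of $P$ and record the structure constants $e_ie_j = \sum_s \tau_{ij}^s\, e_s$ and $[e_i,e_j] = \sum_s \gamma_{ij}^s\, e_s$. The first observation, and the place where finite dimensionality of $P$ enters, is that $P\ot A = \bigoplus_{s=1}^n e_s\ot A$, so any Poisson homomorphism $g\colon Q \to P\ot A$ is uniquely encoded by the family of linear maps $g_1,\dots,g_n\colon Q\to A$ determined by $g(q) = \sum_{s=1}^n e_s\ot g_s(q)$. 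Expanding $g(qq') = g(q)g(q')$ and $g([q,q']) = [g(q),g(q')]$ in this basis and using \equref{2} together with the structure constants, I would show that $g$ being a morphism of Poisson algebras is equivalent to the two families of identities
\[
g_s(qq') = \sum_{i,j=1}^n \tau_{ij}^s\, g_i(q)\, g_j(q'), \qquad g_s([q,q']) = \sum_{i,j=1}^n \gamma_{ij}^s\, g_i(q)\, g_j(q'),
\]
holding in $A$ for all $q,q'\in Q$ and all $s\in\{1,\dots,n\}$.

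This description dictates the construction. Let $F := S(P^* \ot Q)$ be the free commutative algebra (symmetric algebra) on the vector space $P^*\ot Q$, and for $q\in Q$ and $s\in\{1,\dots,n\}$ write $X_s(q) := e_s^* \ot q$ for the corresponding degree-one generator, so that $q\mapsto X_s(q)$ is linear. Let $I\subseteq F$ be the ideal generated by the elements $X_s(qq') - \sum_{i,j}\tau_{ij}^s\, X_i(q)X_j(q')$ and $X_s([q,q']) - \sum_{i,j}\gamma_{ij}^s\, X_i(q)X_j(q')$ for all $q,q'\in Q$ and all $s$, and set $\mathcal{P}(P,Q) := F/I \in {\rm ComAlg}_k$. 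Denoting by $\eta_s(q)$ the image of $X_s(q)$, I define $\eta\colon Q \to P\ot\mathcal{P}(P,Q)$ by $\eta(q) := \sum_{s=1}^n e_s\ot\eta_s(q)$; this sum is finite precisely because $\dim P = n < \infty$, which is exactly what makes $\eta(q)$ a legitimate element of the (uncompleted) tensor product. By the computation of the first paragraph run in reverse, the defining relations of $I$ say precisely that $\eta(qq') = \eta(q)\eta(q')$ and $\eta([q,q']) = [\eta(q),\eta(q')]$, so $\eta$ is a Poisson homomorphism.

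For the universal property, start from an arbitrary commutative algebra $A$ and a Poisson homomorphism $g\colon Q\to P\ot A$ with components $g_1,\dots,g_n$ as above. The linear map $P^*\ot Q \to A$ sending $e_s^*\ot q \mapsto g_s(q)$ extends, by the universal property of the symmetric algebra, to a unique algebra homomorphism $\widetilde{\theta}\colon F\to A$; the two families of identities satisfied by the $g_s$ say exactly that $\widetilde{\theta}$ kills all generators of $I$, so it descends to an algebra map $\theta\colon\mathcal{P}(P,Q)\to A$ with $\theta(\eta_s(q)) = g_s(q)$. Applying ${\rm Id}_P\ot\theta$ to $\eta(q)$ then recovers $g(q)$, i.e. $g = ({\rm Id}_P\ot\theta)\circ\eta$, and since the $\eta_s(q)$ generate $\mathcal{P}(P,Q)$ as an algebra, any $\theta$ making the diagram \equref{diagrama_univ} commute is forced to satisfy $\theta(\eta_s(q)) = g_s(q)$, giving uniqueness. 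Thus $\bigl(\mathcal{P}(P,Q),\,\eta\bigr)$ satisfies the universal property of \deref{defcheie}.

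Finally, the adjunction is a reformulation of what has just been proved: the assignment $\theta\mapsto ({\rm Id}_P\ot\theta)\circ\eta$ is a bijection $\Hom_{{\rm ComAlg}_k}\bigl(\mathcal{P}(P,Q),\,A\bigr)\cong\Hom_{{\rm Poiss}_k}\bigl(Q,\,P\ot A\bigr)$ which is natural in $A$, so $\eta$ is a universal arrow from $Q$ to the functor $P\ot -$. Since such a universal arrow exists for every object $Q$ of ${\rm Poiss}_k$, the functor $P\ot -$ admits a left adjoint, which on objects is $Q\mapsto\mathcal{P}(P,Q)$ with unit $\eta$; the action of $\mathcal{P}(P,-)$ on morphisms and the naturality of the bijection in $Q$ are then determined by, and routinely checked against, the universal property. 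I expect no serious obstacle in the verifications themselves: the two genuinely load-bearing points are the basiswise translation of the morphism conditions into the quadratic relations defining $I$, and the remark that finite dimensionality of $P$ is what keeps the unit $\eta$ inside $P\ot\mathcal{P}(P,Q)$ rather than in a completion — this is the precise reason the hypothesis $\dim P < \infty$ cannot be dropped.
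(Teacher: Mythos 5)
Your proposal is correct and follows essentially the same route as the paper: present $\mathcal{P}(P,Q)$ by generators indexed by a basis of $P$ paired with $Q$, impose the quadratic relations that make $\eta$ multiplicative for both the product and the bracket, and read off the natural bijection $\Hom_{{\rm ComAlg}_k}(\mathcal{P}(P,Q),A)\cong\Hom_{{\rm Poiss}_k}(Q,P\ot A)$. The only difference is cosmetic: you take the symmetric algebra $S(P^*\ot Q)$ with relations indexed by all pairs $q,q'\in Q$, whereas the paper picks a basis $\{f_i\}$ of $Q$ and works with the polynomial algebra $k[X_{si}]$; by bilinearity of the relations the two ideals coincide, so the constructions agree.
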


\begin{proof}
Let $n \in \NN^{*}$ be a positive integer and $\{e_1, \cdots, e_n\}$ a basis of the Poisson algebra $P$. We denote by $\{\tau_{i, j}^s \, | \, i, j, s = 1, \cdots, n \}$ and $\{\mu_{i, j}^s \, | \, i, j, s = 1, \cdots, n \}$ the structure constants of $P$ with respect to the associative and Lie structures, i.e. for all
$i$, $j = 1, \cdots, n$ we have:
\begin{eqnarray}
e_i \, e_j = \sum_{s=1}^n \,
\tau_{i, j}^s \, e_s, \qquad
\left[e_i, \, e_j \right]_{P} = \sum_{s=1}^n \,
\mu_{i, j}^s \, e_s. \eqlabel{const3.2}
\end{eqnarray}
We will construct explicitly a left adjoint ${\mathcal P}
(P, \, - ) : {\rm Poiss}_k \to {\rm ComAlg}_k$ for the tensor product functor $P \ot - \, : {\rm ComAlg}_k \to {\rm
Poiss}_k$. To this end, let $Q$ be a Poisson algebra and consider $\{f_i \, | \, i \in I\}$ to be its basis. Then, for all $i$, $j\in I$, we can find two finite subsets $U_{i,j}$ and $V_{i,j}$ of $I$ such that:
\begin{eqnarray}
f_{i}\, f_{j} = \sum_{u \in U_{i, j}}
\, \alpha_{i, j}^u \, f_{u}, \qquad
\left[f_i, \, f_j \right]_{Q} = \sum_{u \in V_{i, j}}
\, \beta_{i, j}^u \, f_{u}\eqlabel{const3.4}
\end{eqnarray}
for some scalars $\alpha_{i, j}^u$, $\beta_{i, j}^u \in k$. Consider now $k [X_{si} \, | \, s = 1, \cdots, n, \, i\in I]$
to be the usual polynomial algebra and let
\begin{equation*}\eqlabel{alguniv}
{\mathcal P} (P, \, Q) :=  k [X_{si} \, | s
= 1, \cdots, n, \, i\in I] / J
\end{equation*}
where $J$ is the ideal generated by all polynomials of the form:
\begin{eqnarray}
\Gamma_{(a, i, j)} ^{(P, \, Q)} = \sum_{u \in
U_{i, j}} \, \alpha_{i, j}^u \, X_{au} - \sum_{s, t = 1}^n \,
\tau_{s, t}^a \, X_{si} X_{tj} \eqlabel{assoc_comp}\\
\Omega_{(a, i, j)} ^{(P, \, Q)} = \sum_{u \in
V_{i, j}} \, \beta_{i, j}^u \, X_{au} - \sum_{s, t = 1}^n \,
\mu_{s, t}^a \, X_{si} X_{tj} \eqlabel{Lie_comp}
\end{eqnarray}
for all $a = 1, \cdots, n$ and $i$, $j\in I$. Denoting $x_{si}
:= \widehat{X_{si}}$, where $\widehat{X_{si}}$ stands for the equivalence class of ${X_{si}}$ in the quotient
algebra ${\mathcal P} (P, \, Q)$, it follows that the relations below hold in
${\mathcal P} (P, \, Q)$:
\begin{eqnarray}
\sum_{u \in
U_{i, j}} \, \alpha_{i, j}^u \, x_{au} = \sum_{s, t = 1}^n \,
\tau_{s, t}^a \, x_{si} x_{tj} \eqlabel{rel1}\\
\sum_{u \in
V_{i, j}} \, \beta_{i, j}^u \, x_{au} = \sum_{s, t = 1}^n \,
\mu_{s, t}^a \, x_{si} x_{tj} \eqlabel{rel2}
\end{eqnarray}
for all $a = 1, \cdots, n$ and $i$, $j \in I$. Next, we consider the following linear map:
\begin{equation}\eqlabel{unitadj}
\eta_{Q} \colon Q \to P \ot {\mathcal P} (P, \, Q), \quad \eta_{Q}
(f_i) := \sum_{s=1}^n \, e_s \ot x_{si},  \quad {\rm for\,\,
all}\,\, i\in I.
\end{equation}
We will see that $\eta_{Q} $ is in fact a Poisson algebra map; indeed, for all $i$, $j\in I$ we have:
\begin{eqnarray*}
&& \left[\eta_{Q} (f_i), \, \eta_{Q} (f_j)
\right]_{P \ot {\mathcal P} (P, \,
Q)} = \left[ \sum_{s=1}^n \, e_s \ot x_{si}, \,\,
\sum_{t=1}^n \, e_t \ot x_{tj} \right]_{P \ot {\mathcal P} (P, \,
Q)}\\
&& = \sum_{s, t =1}^n \, \underline{\left[e_s, \, e_t \right]_{P}}
\ot x_{si} x_{tj} = \sum_{a =1}^n \, e_a \ot \underline{\bigl(\sum_{s, \,t =
1}^n \, \mu_{s, t}^a \, x_{si} x_{tj} \bigl)} \,\, \stackrel{\equref{rel2}}
= \,\, \sum_{a=1}^n \, e_a \ot \bigl( \sum_{u \in V_{i, j}} \,
\beta_{i, j}^u \, x_{au}\bigl) \\
&&= \sum_{u \in V_{i, j}} \, \beta_{i, j}^u \, \eta_{Q}
(f_u)  = \eta_{Q} (\left[f_i, \, f_j \right]_{Q})
\end{eqnarray*}
and
\begin{eqnarray*}
&&\eta_{Q} (f_i)\, \eta_{Q} (f_j)
 = \bigl(\sum_{s=1}^n \, e_s \ot x_{si}\bigl) \,\bigl(
\sum_{t=1}^n \, e_t \ot x_{tj}\bigl)= \sum_{s, t =1}^n \, \underline{e_s \, e_t}
\ot x_{si} x_{tj}\\
&& = \sum_{a =1}^n \, e_a \ot \underline{\bigl(\sum_{s, \,t =
1}^n \, \tau_{s, t}^a \, x_{si} x_{tj} \bigl)} \,\, \stackrel{\equref{rel1}}
= \,\, \sum_{a=1}^n \, e_a \ot \bigl( \sum_{u \in U_{i, j}} \,
\alpha_{i, j}^u \, x_{au}\bigl) = \sum_{u \in U_{i, j}} \, \alpha_{i, j}^u \, \eta_{Q}
(f_u) \\
&& = \eta_{Q} (f_i \, f_j)
\end{eqnarray*}
This shows that $\eta_{Q}$ is indeed a Poisson algebra homomorphism, as claimed. The next step of the proof consists in
showing that for any Poisson algebra $Q$ and any
commutative algebra $A$ the map defined below is bijective:
\begin{equation}\eqlabel{adjp}
\gamma_{Q, \, A} \colon {\rm Hom}_{\rm Alg_k} \, (
{\mathcal P} (P, \, Q), \, A) \to {\rm
Hom}_{\rm Poiss_k} \, (Q, \, P \ot A), \quad
\gamma_{Q, \, A} (\theta) = \bigl( {\rm
Id}_{P} \ot \theta \bigl) \circ \eta_{Q}
\end{equation}
To this end, let $g \colon Q \to P\ot A$ be a
Poisson algebra homomorphism. We have to prove that there exists a unique algebra homomorphism $\theta \colon {\mathcal P} (P, \, Q) \to A$ such that
$g = \bigl( {\rm Id}_{P} \ot \theta \bigl) \circ \, \eta_{Q}$. Let $\{d_{si} \, | \, s = 1, \cdots, n, i\in I \}$ be a family of elements of $A$ such that for
all $i\in I$ we have:
\begin{equation}\eqlabel{constfmor}
g( f_i) = \sum_{s=1}^n \, e_s \ot d_{si}
\end{equation}
Furthermore, as $g \colon Q \to P\ot A$ is a
Poisson algebra map, we can easily conclude that the following compatibilities hold for all $a = 1, \cdots, n$ and $i$, $j \in I$:
\begin{eqnarray}
\sum_{u \in
U_{i, j}} \, \alpha_{i, j}^u \, d_{au} = \sum_{s, t = 1}^n \,
\tau_{s, t}^a \, d_{si} d_{tj} \eqlabel{rel3}\\
\sum_{u \in
V_{i, j}} \, \beta_{i, j}^u \, d_{au} = \sum_{s, t = 1}^n \,
\mu_{s, t}^a \, d_{si} d_{tj} \eqlabel{rel4}
\end{eqnarray}
The universal property of the polynomial algebra yields a unique algebra homomorphism $v \colon k [X_{si} \,
| s = 1, \cdots, n, \, i\in I] \to A$ such that $v (X_{si}) =
d_{si}$, for all $s = 1, \cdots, n$ and $i\in I$. Furthermore, we have
$J \subseteq {\rm Ker} (v)$, where $J$ is the ideal generated by
all polynomials listed in \equref{assoc_comp} and \equref{Lie_comp}. Indeed, for all $i$, $j\in I$
and $a = 1, \cdots, n$ we have:
\begin{eqnarray*}
v \bigl( \Gamma_{(a, i, j)} ^{(P, \, Q)}
 \bigl) = v \bigl( \sum_{u \in
U_{i, j}} \, \alpha_{i, j}^u \, X_{au} - \sum_{s, t = 1}^n \,
\tau_{s, t}^a \, X_{si} X_{tj}\bigl) = \underline{\sum_{u \in U_{i, j}} \, \alpha_{i, j}^u \, d_{au} - \sum_{s,
t = 1}^n \, \tau_{s, t}^a \, d_{si} d_{tj}}
\stackrel{\equref{rel3}} = 0\\
v \bigl( \Omega_{(a, i, j)} ^{(P, \, Q)}
 \bigl) = v \bigl( \sum_{u \in
V_{i, j}} \, \beta_{i, j}^u \, X_{au} - \sum_{s, t = 1}^n \,
\mu_{s, t}^a \, X_{si} X_{tj}\bigl) = \underline{\sum_{u \in V_{i, j}} \, \beta_{i, j}^u \, d_{au} - \sum_{s,
t = 1}^n \, \mu_{s, t}^a \, d_{si} d_{tj}}
\stackrel{\equref{rel4}} = 0
\end{eqnarray*}
Thus, there exists a unique algebra homomorphism
$\theta \colon {\mathcal P} (P, \, Q) \to A$ such
that $\theta (x_{si}) = d_{si}$, for all $s = 1, \cdots, n$ and
$i\in I$. We are left to show that $g = \bigl( {\rm Id}_{P} \ot \theta \bigl) \circ \, \eta_{Q}$. To this end, for all $i\in I$ we have:
\begin{eqnarray*}
\bigl( {\rm Id}_{P} \ot \theta \bigl) \circ \,
\eta_{Q} (f_i) = \bigl( {\rm Id}_{P} \ot
\theta \bigl) \bigl( \sum_{s=1}^n \, e_s \ot x_{si} \bigl) = \sum_{s=1}^n \, e_s \ot d_{si} \stackrel{\equref{constfmor}}
= g (f_i),
\end{eqnarray*}
as desired. We are left to show that $\theta$ is the unique morphism with this property. Indeed, consider $\tilde{\theta} \colon {\mathcal P}
(P, \, Q) \to A$ to be another algebra homomorphism such that $\bigl( {\rm Id}_{P} \ot \tilde{\theta} \bigl) \circ \,
\eta_{Q} (f_i) = g
(f_i)$, for all $i\in I$. Then,  $\sum_{s=1}^n \, e_s \ot
\tilde{\theta} (x_{si}) = \sum_{s=1}^n \, e_s \ot d_{si}$, and
hence $\tilde{\theta} (x_{si}) = d_{si} = \theta (x_{si})$, for
all $s= 1, \cdots, n$ and $i\in I$. As the set $\{x_{si} \, | s= 1, \cdots, n, i \in I \, \}$ generates the algebra ${\mathcal P} (P,
\, Q)$ we can conclude that $\tilde{\theta} =
\theta$. To summarize, we proved that the map
$\gamma_{Q, \, A}$ given by \equref{adjp} is bijective.

The only thing left to show is that given a finite dimensional Poisson algebra $P$, assigning $Q \mapsto {\mathcal P} (P, \,
Q)$ defines a functor  ${\mathcal P} (P, \,
- ) \colon {\rm Poiss}_k \to {\rm ComAlg}_k$. Indeed, let $u \colon
Q_{1} \to Q_2$ be a Poisson
algebra homomorphism. Applying the bijectivity of the map defined by
\equref{adjp} for the Poisson algebra homomorphism $\eta_{Q_2} \circ u$, yields a unique algebra homomorphism $\theta \colon {\mathcal P} (P, \,
Q_1) \to {\mathcal P} (P, \,
Q_2)$ such that:
\begin{eqnarray} \eqlabel{diag2}
\bigl( {\rm Id}_{P} \ot \theta
\bigl) \circ \, \eta_{Q_1} = \eta_{Q_2}
\circ u
\end{eqnarray}
By considering ${\mathcal P}
(P, \, u)$ to be this unique morphism $\theta$, the functor ${\mathcal P}
(P, \, - )$ is fully defined. Moreover, it
can now be easily checked that ${\mathcal P} (P, \, -
)$ is indeed a functor and that $\gamma_{Q, \, A}$ is natural in both variables. Therefore, the functor ${\mathcal P}
(P, \, - )$ is the left adjoint of the functor $P \ot -$. Finally, the bijectivity of the map \equref{adjp} shows that the pair
$\bigl( {\mathcal P} (P, \, Q ), \, \eta_{Q} \bigl)$ is indeed the universal algebra of $P$ and $Q$.
\end{proof}

\begin{remark} \relabel{Jacobi}
\thref{adj} remains valid if we replace ${\rm Poiss}_k$ by the category ${\rm Poiss}_k^1$ of unital Poisson algebras. If $P$ is a unital finite dimensional Poisson algebra,
then the functor $P \ot - \, \colon {\rm ComAlg}_k \to {\rm Poiss}_k^1 $ has a left adjoint
${\mathcal P}^1 (P, \, - ) \colon {\rm Poiss}_k ^1 \to {\rm ComAlg}_k$ which is constructed as follows. If
$\{e_1, \cdots, e_n\}$ is a basis of the Poisson algebra $P$ such that $e_1 := 1_P$ and $Q$ is a
unital Poisson algebra with basis $\{f_i \, | \, i \in I\}$ such that $f_{i_0} := 1_Q$ then we define
$$
{\mathcal P}^1 (P, \,  Q) := {\mathcal P} (P, \,  Q) / L
$$
where $L$ is the ideal of ${\mathcal P} (P, \,  Q)$ generated by $x_{s i_0} - \delta_{s, 1}$, for all $s = 1, \cdots, n$. These new relations are necessary and sufficient for the map $\eta_{Q} \colon Q \to P \ot {\mathcal P}^1 (P, \, Q)$ defined in \equref{unitadj}
to be unital, i.e. $\eta_{Q} (1_Q) = 1_P \ot 1$. The rest of the proof goes exactly as for \thref{adj}.

Furthermore, \thref{adj} can be generalized to the category of Jacobi algebras by repeating verbatim the above proof.
Recall that a \emph{Jacobi algebra} \cite{am15} is a quadruple $J = (J, \, m_J, \, 1_{J},
\, [-, \, -])$, where $(J, m_J, 1_{J})$ is a unital commutative algebra, $(A,
\, [-, \, -])$ is a Lie algebra such that for all $a$, $b$, $c\in
J $ we have:
\begin{equation}\eqlabel{jac1}
[ab, \, c ] =  a \, [b, \, c] + [a, \, c] \, b  - ab \, [1_A, \,
c]
\end{equation}
We can prove that for any Jacobi algebra $J$ and any commutative algebra $A$, the tensor product $J\ot A$ is a Jacobi algebra with
the structures given by \equref{2}. If we denote by ${\rm Jac}_k$ the category of Jacobi algebras, then for any
finite diminesional Jacobi algebra $J$, the functor $J \ot - \, \colon {\rm ComAlg}_k \to {\rm Jac}_k $ has a left adjoint.
\end{remark}

The universal algebra ${\mathcal P} (P, \, Q )$ of two Poisson algebras $P$ and $Q$ as constructed in \thref{adj} is an important tool for comparing the two Poisson algebras:
the first application shows that the set of all usual algebra maps ${\mathcal P} (P, \, Q ) \to k$ parameterize the space of all
Poisson algebra maps $Q \to P$. Indeed, by considering $A := k$, the bijection described in \equref{adjp} comes down to the following:

\begin{corollary}\colabel{morP}
Let $P$ and $Q$ be two Poisson algebras such
that $P$ is finite dimensional. Then the following map is bijective:
\begin{equation}\eqlabel{adjpois}
\gamma \, : {\rm Hom}_{\rm Alg_k} \, ( {\mathcal P} (P,
\, Q), \, k) \to {\rm Hom}_{\rm Poiss_k} \,
(Q, \, P), \quad \gamma (\theta) := \bigl(
{\rm Id}_{P} \ot \theta \bigl) \circ \eta_{Q}
\end{equation}
\end{corollary}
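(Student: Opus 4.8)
The plan is to read off this statement as the special case $A = k$ of the adjunction bijection constructed in the proof of \thref{adj}. Since $k$ is itself a commutative unital algebra, the map $\gamma_{Q, \, A}$ of \equref{adjp} applied with $A = k$ is already a bijection
\[
\gamma_{Q, \, k} \colon {\rm Hom}_{\rm Alg_k} \, ( {\mathcal P} (P, \, Q), \, k) \to {\rm Hom}_{\rm Poiss_k} \, (Q, \, P \ot k), \qquad \gamma_{Q, \, k} (\theta) = \bigl( {\rm Id}_{P} \ot \theta \bigr) \circ \eta_{Q},
\]
so the only work left is to identify $P \ot k$ with $P$ as Poisson algebras and to check that this identification turns $\gamma_{Q, \, k}$ into the map $\gamma$ of \equref{adjpois}.

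First I would verify that the canonical linear isomorphism $\phi \colon P \ot k \to P$, $\phi(x \ot \lambda) = \lambda x$, is in fact an isomorphism of Poisson algebras. Comparing with the structure maps recorded in \equref{2}, one has $\phi\bigl((x \ot a)(y \ot b)\bigr) = (ab)\,xy$ and $\phi\bigl([x \ot a, \, y \ot b]\bigr) = (ab)\,[x, \, y]$, which coincide with $\phi(x \ot a)\,\phi(y \ot b)$ and $[\phi(x \ot a), \, \phi(y \ot b)]$ respectively; thus $\phi$ preserves both the commutative multiplication and the Lie bracket, and post-composition with $\phi$ gives a bijection $\phi_{*} \colon {\rm Hom}_{\rm Poiss_k}(Q, \, P \ot k) \to {\rm Hom}_{\rm Poiss_k}(Q, \, P)$.

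Finally I would set $\gamma := \phi_{*} \circ \gamma_{Q, \, k}$, a composite of two bijections and hence bijective; unwinding the definitions, $\gamma(\theta) = \phi \circ \bigl({\rm Id}_P \ot \theta\bigr) \circ \eta_{Q}$, which is precisely $\bigl({\rm Id}_P \ot \theta\bigr) \circ \eta_{Q}$ once $P \ot k$ is silently identified with $P$, matching \equref{adjpois}. I do not expect any real obstacle here: the entire content is the substitution $A = k$ together with the transparent Poisson isomorphism $P \ot k \cong P$, so beyond this bookkeeping there is nothing further to prove.
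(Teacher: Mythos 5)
Your proposal is correct and is exactly the paper's argument: the authors obtain \coref{morP} by specializing the bijection $\gamma_{Q,\,A}$ of \equref{adjp} to $A:=k$, with the identification $P\ot k\cong P$ left implicit. Your explicit verification that the canonical map $P\ot k\to P$ is a Poisson isomorphism is harmless extra bookkeeping, not a different route.
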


The next applications of the universal algebra ${\mathcal P} (P, \, Q )$ are more nuanced and refer to representations (i.e. Poisson modules) of the two Poisson algebras $P$ and $Q$. In the sequel, we will use the explicit description through generators and relations of the algebra ${\mathcal P} (P, \, Q )$ provided in the proof of \thref{adj}.

\begin{theorem}\thlabel{tensor_f}
Let $P$ and $Q$ be Poisson algebras such that $P$ is finite dimensional, $A = {\mathcal P}(P, \, Q)$ the corresponding universal algebra,
$(U, \blacktriangleright,\curvearrowright) \in {}_P{\mathcal P} \Mm$ a Poisson $P$-module and $(V, \cdot) \in {}_A \Mm$ an $A$-module.

Then $(U \otimes V,\, \triangleright,\, \rightharpoonup) \in {}_Q{\mathcal P} \Mm$ is a Poisson $Q$-module where the actions of $Q$ on $U \otimes V$ are given for all $i \in I$, $l \in U$ and $t \in V$ by:
\begin{eqnarray}
&& f_{i} \triangleright (l \ot t) = \sum_{j=1}^{n}\, (e_{j} \blacktriangleright\, l) \ot (x_{ji} \cdot t)\eqlabel{0.0.1}\\
&& f_{i} \rightharpoonup (l \ot t) = \sum_{j=1}^{n}\, (e_{j} \curvearrowright  l) \ot (x_{ji} \cdot t)\eqlabel{0.0.2}
\end{eqnarray}
In particular, any fixed $(U, \blacktriangleright,\curvearrowright) \in {}_P{\mathcal P} \Mm$ yields a functor $U \ot - \colon {}_A \Mm \to {}_Q{\mathcal P} \Mm$ from the category of $A$-modules to the category of Poisson $Q$-modules; similarly, any fixed $(V, \cdot) \in {}_A \Mm$ gives rise to a functor $ - \ot V \colon {}_P{\mathcal P} \Mm \to {}_Q{\mathcal P} \Mm$ connecting the categories of Poisson modules over $P$ and $Q$.
\end{theorem}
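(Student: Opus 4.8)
The plan is to verify directly that the two bilinear actions $\triangleright$ and $\rightharpoonup$ defined in \equref{0.0.1}--\equref{0.0.2} turn $U \ot V$ into a Poisson $Q$-module, i.e. that $(U \ot V, \triangleright)$ is a left associative $Q$-module, that $(U \ot V, \rightharpoonup)$ is a left Lie $Q$-module, and that the two compatibilities \equref{Pmod1}--\equref{Pmod2} hold. Since every map in sight is bilinear and both $P$ and $Q$ carry the fixed bases $\{e_s\}$ and $\{f_i \mid i \in I\}$, it is enough to check each identity on basis elements $f_i, f_j$ of $Q$ and on a simple tensor $l \ot t \in U \ot V$. The engine of all four computations is the pair of relations \equref{rel1}--\equref{rel2} holding in $A = {\mathcal P}(P, Q)$, which may be applied after acting on $t \in V$ since $V$ is an $A$-module; these relations are precisely what convert a product $\sum_{s, r} \tau_{s,r}^a x_{si} x_{rj}$ (resp. the analogous Lie expression) back into the single combination $\sum_u \alpha_{i,j}^u x_{au}$ coming from the $Q$-structure constants \equref{const3.4}.

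For the associativity axiom I would expand $(f_i f_j) \triangleright (l \ot t)$ using \equref{const3.4}, collect the scalars into $\sum_u \alpha_{i,j}^u x_{au}$ acting on $t$, and rewrite this through \equref{rel1} as $\sum_{s,r} \tau_{s,r}^a x_{si} x_{rj}$; recognizing $\sum_a \tau_{s,r}^a e_a = e_s e_r$ and using that $(U, \blacktriangleright)$ is a $P$-module turns $(e_s e_r) \blacktriangleright l$ into $e_s \blacktriangleright (e_r \blacktriangleright l)$, which is exactly the iterated action $f_i \triangleright \bigl( f_j \triangleright (l \ot t) \bigr)$ computed with the help of the $A$-module axiom on $V$. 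The Lie-module axiom runs in parallel, now invoking \equref{rel2}, the identity $\sum_a \mu_{s,r}^a e_a = [e_s, e_r]_P$, and the Lie $P$-module structure of $(U, \curvearrowright)$. The two compatibility conditions \equref{Pmod1}--\equref{Pmod2} are handled identically, this time calling on the mixed axioms \equref{Pmod1} and \equref{Pmod2} for the Poisson $P$-module $U$ to rewrite $(e_s e_r) \curvearrowright l$ and $[e_s, e_r]_P \blacktriangleright l$, respectively.

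The single point that genuinely demands care --- and where the hypotheses bite hardest --- is the role of \emph{commutativity} of $A$. In the Lie-module axiom and in both compatibility conditions, the two sides of the identity produce the scalar factors $x_{si} x_{rj}$ and $x_{rj} x_{si}$ in opposite orders, and these can be identified only because $A = {\mathcal P}(P, Q)$ is commutative; once they are, the antisymmetric combinations of iterated $U$-actions match the left-hand sides on the nose. This is the conceptual heart of the statement: the commutativity built into the universal algebra is exactly what permits the Lie part of the Poisson structure of $Q$ to be transported across the tensor product.

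The functoriality claims are then formal. For fixed $U$, a morphism $\phi \colon V \to V'$ of $A$-modules induces ${\rm Id}_U \ot \phi \colon U \ot V \to U \ot V'$, and one reads off directly from \equref{0.0.1}--\equref{0.0.2} that it intertwines both $\triangleright$ and $\rightharpoonup$, so that $U \ot - \colon {}_A \Mm \to {}_Q {\mathcal P} \Mm$ is a functor; symmetrically, for fixed $V$ a morphism $\psi \colon U \to U'$ of Poisson $P$-modules induces $\psi \ot {\rm Id}_V$ and gives the functor $- \ot V \colon {}_P {\mathcal P} \Mm \to {}_Q {\mathcal P} \Mm$. Preservation of identities and of composition is immediate, completing the proof.
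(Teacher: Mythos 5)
Your proposal follows essentially the same route as the paper: a direct verification on basis elements, driven by the defining relations \equref{rel1}--\equref{rel2} of $A={\mathcal P}(P,Q)$, the structure constants \equref{const3.2} and \equref{const3.4}, the Poisson $P$-module axioms for $U$, and the commutativity of $A$ (which you correctly single out as the step that makes the Lie-type identities close up). The only cosmetic difference is that the paper outsources the Lie $Q$-module axiom to a citation and leaves the functoriality claims implicit, whereas you sketch both directly; the underlying computations are the same.
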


\begin{proof}
We start by showing that $(U \otimes V,\, \triangleright)$ is a left $Q$-module. To thie end, we have:
\begin{eqnarray*}
&& (\underline{f_{i} f_{j}}) \triangleright (l \ot t) \stackrel{\equref{const3.4}}= \sum_{u \in U_{i,j}} \alpha^{u}_{i,j} \underline{f_{u} \triangleright  (l \ot t)} \stackrel{\equref{0.0.1}}= \sum_{u \in U_{i,j}, r=\overline{1,n}}
\,  (\alpha^{u}_{i,j} e_{r} \blacktriangleright\, l) \ot (x_{ru} \cdot t)\\
&&= \sum_{r=1}^{n} \, (e_{r} \blacktriangleright\, l) \ot \bigl(\underline{\sum_{u \in U_{i,j}} \alpha^{u}_{i,j} x_{ru}} \bigl)\, \cdot \,t \stackrel{\equref{rel1}}= \sum_{r,s,p=1}^{n} \tau_{s, p}^r \, (e_{r}  \blacktriangleright\, l) \ot  (x_{si} x_{pj}) \cdot t \\
&&= \sum_{s,p=1}^{n}\bigl(\underline{\sum_{r=1}^{n} \tau_{s, p}^r \, e_{r}} \bigl)  \blacktriangleright\, l \ot  (x_{si} x_{pj}) \cdot t  \stackrel{\equref{const3.2}}=  \sum_{s,p=1}^{n}\, \underline{(e_{s}e_{p}) \blacktriangleright\, l} \, \ot \, (x_{si} x_{pj}) \cdot t \\
&& = \sum_{s,p=1}^{n}\, e_{s} \blacktriangleright (e_{p}\blacktriangleright l)  \ot \underline{ (x_{si} x_{pj}) \cdot t} =  \sum_{p=1}^{n}\, \Bigl(\underline{\sum_{s=1}^{n}\,e_{s} \blacktriangleright (e_{p}\blacktriangleright l)  \ot  x_{si}\cdot( x_{pj} \cdot t)}\Bigl)\\
&&  \stackrel{\equref{0.0.1}}= f_{i} \triangleright \underline{\sum_{p=1}^{n}\, e_{p}\blacktriangleright l  \ot x_{pj} \cdot t}  \stackrel{\equref{0.0.1}}= f_{i} \triangleright \bigl(f_{j} \triangleright (l \ot t) \bigl)
\end{eqnarray*}
We point out that $(U \otimes V,\, \rightharpoonup)$ being a left Lie $Q$-module can be proved exactly as in (the proof of) \cite[Theorem 2.1]{aa2022}. The proof will be finished once we prove that compatibilities \equref{Pmod1} and \equref{Pmod2} hold for $(U \otimes V,\, \triangleright,\, \rightharpoonup)$. Indeed, as compatibilities \equref{Pmod1} and \equref{Pmod2} hold for $(U, \blacktriangleright,\curvearrowright)$ and $A$ is a commutative algebra, for all $i$, $j \in I$ and $l \in U$, $t \in V$, we have:
\begin{eqnarray*}
&& (\underline{f_{i} f_{j}}) \rightharpoonup (l \ot t) \stackrel{\equref{const3.4}}= \sum_{u \in U_{i,j}} \alpha^{u}_{i,j} \underline{f_{u} \rightharpoonup  (l \ot t)} \stackrel{\equref{0.0.2}}= \sum_{u \in U_{i,j}, r=\overline{1,n}}
\,  (\alpha^{u}_{i,j} e_{r} \curvearrowright l) \ot (x_{ru} \cdot t)\\
&&= \sum_{r=1}^{n} \, (e_{r} \curvearrowright l) \ot \bigl(\underline{\sum_{u \in U_{i,j}} \alpha^{u}_{i,j} x_{ru}} \bigl)\, \cdot \,t \stackrel{\equref{rel1}}= \sum_{r,s,p=1}^{n} \tau_{s, p}^r \, (e_{r} \curvearrowright l) \ot  (x_{si} x_{pj}) \cdot t \\
&&= \sum_{s,p=1}^{n}\bigl(\underline{\sum_{r=1}^{n} \tau_{s, p}^r \, e_{r}} \bigl) \curvearrowright l \ot  (x_{si} x_{pj}) \cdot t  \stackrel{\equref{const3.2}}=  \sum_{s,p=1}^{n}\, \underline{(e_{s}e_{p}) \curvearrowright l} \, \ot \, (x_{si} x_{pj}) \cdot t \\
&&\stackrel{\equref{Pmod1}}= \sum_{s,p=1}^{n}\, \bigl(e_{s} \blacktriangleright (e_{p} \curvearrowright l) + e_{p} \blacktriangleright (e_{s} \curvearrowright l) \bigl) \ot  (x_{si} x_{pj}) \cdot t\\
&& = \underline{\sum_{s,p=1}^{n}\, e_{s} \blacktriangleright (e_{p} \curvearrowright l) \ot  x_{si} \cdot (x_{pj} \cdot t)} + \underline{\sum_{s,p=1}^{n}\, e_{p} \blacktriangleright (e_{s} \curvearrowright l) \, \ot \,  x_{pj} \cdot (x_{si} \cdot t)}\\
&& \stackrel{\equref{0.0.1}}=  f_{i} \triangleright \underline{\sum_{p=1}^{n}\, (e_{p} \curvearrowright l) \, \ot \,  (x_{pj} \cdot t)} + f_{j} \triangleright \underline{\sum_{s=1}^{n}\, (e_{s} \curvearrowright l) \ot (x_{si} \cdot  t)}\\
&&\stackrel{\equref{0.0.2}}=   f_{i} \triangleright \bigl(f_{j} \rightharpoonup (l \ot t) \bigl) \, + \, f_{j} \triangleright \bigl(f_{i} \rightharpoonup (l \ot t) \bigl)
\end{eqnarray*}
and
\begin{eqnarray*}
&& \underline{[f_{i},\, f_{j}]} \triangleright (l \ot t) \stackrel{\equref{const3.4}} = \sum_{v \in V_{i,j}} \beta^{u}_{i,j}\, \underline{f_{u} \triangleright  (l \ot t)} \stackrel{\equref{0.0.1}}= \sum_{u \in V_{i,j}, r=\overline{1,n}}
\,  \beta^{u}_{i,j} (e_{r} \blacktriangleright l) \ot (x_{ru} \cdot t)\\
&&= \sum_{r=1}^{n} \, (e_{r} \blacktriangleright l) \ot \bigl(\underline{\sum_{u \in V_{i,j}} \beta^{u}_{i,j} x_{ru}} \bigl)\,\cdot \,t \stackrel{\equref{rel2}}= \sum_{r,s,p=1}^{n} \mu_{s, p}^r \, (e_{r} \blacktriangleright l) \ot  (x_{si} x_{pj}) \cdot t \\
&&= \sum_{s,p=1}^{n}\bigl(\underline{\sum_{r=1}^{n} \mu_{s, p}^r \, e_{r}} \bigl)\, \blacktriangleright \,l \, \ot \, (x_{si} x_{pj}) \cdot t  \stackrel{\equref{const3.2}}=  \sum_{s,p=1}^{n}\, \underline{[e_{s},\,e_{p}] \blacktriangleright l} \, \ot \, (x_{si} x_{pj}) \cdot t \\
&&\stackrel{\equref{Pmod2}}= \sum_{s,p=1}^{n}\, \bigl(e_{s}  \curvearrowright (e_{p} \blacktriangleright l) - e_{p} \curvearrowright (e_{s}
\blacktriangleright l) \bigl) \, \ot \,   (x_{si} x_{pj}) \cdot t\\
&&= \underline{\sum_{s,p=1}^{n}\, \bigl(e_{s} \curvearrowright (e_{p} \blacktriangleright l)\bigl)\, \ot \, x_{si} \cdot (x_{pj} \cdot t)} -  \underline{\sum_{s,p=1}^{n}\, \bigl(e_{p} \curvearrowright (e_{s} \blacktriangleright l)\bigl)\, \ot \, x_{pj} \cdot (x_{si} \cdot t)}\\
&& \stackrel{\equref{0.0.2}}= f_{i} \rightharpoonup \underline{\sum_{p=1}^{n}\, (e_{p} \blacktriangleright l) \ot (x_{pj} \cdot t)} - f_{j} \rightharpoonup \underline{\sum_{s=1}^{n}\, (e_{s} \blacktriangleright l) \ot (x_{si} \cdot  t)}\\
&&\stackrel{\equref{0.0.1}}= f_{i} \rightharpoonup \bigl(f_{j} \triangleright (l \ot t) \bigl) \, - \, f_{j} \rightharpoonup  \bigl(f_{i} \triangleright (l \ot t) \bigl)
\end{eqnarray*}
which concludes the proof.
\end{proof}

Furhermore, if $(U, \blacktriangleright, \curvearrowright) \in {}_P{\mathcal P} \Mm$ is finite dimensional then the first functor constructed
in \thref{tensor_f} admits a left adjoint:

\begin{theorem}\thlabel{adjoint_tens}
Let $P$ and $Q$ be Poisson algebras such that $P$ is finite dimensional, $A = {\mathcal P}(P, \, Q)$ and $(U, \blacktriangleright,\curvearrowright) \in {}_P{\mathcal P} \Mm$ a finite dimensional Poisson $P$-module. Then the functor $U \ot - \colon {}_A \Mm \to {}_Q{\mathcal P} \Mm$ has a left adjoint
${\mathcal U} (U, \, - ) \colon {}_Q{\mathcal P} \Mm \to {}_A \Mm$.
\end{theorem}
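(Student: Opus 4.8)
The plan is to construct $\mathcal U(U,N)$ explicitly as a quotient of a free $A$-module, in the spirit of the generators-and-relations description of $\mathcal P(P,Q)$ from \thref{adj}, and then to exhibit a natural bijection
\[
\Hom_{{}_A\Mm}\bigl(\mathcal U(U,N),\,V\bigr)\;\cong\;\Hom_{{}_Q\mathcal P\Mm}\bigl(N,\,U\ot V\bigr).
\]
First I would fix a basis $\{u_1,\dots,u_m\}$ of the finite dimensional Poisson $P$-module $U$, record the structure constants $e_j\blacktriangleright u_k=\sum_l p^l_{jk}\,u_l$ and $e_j\curvearrowright u_k=\sum_l q^l_{jk}\,u_l$, and write $\{u_1^*,\dots,u_m^*\}$ for the dual basis. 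The point of using finite dimensionality is that a $k$-linear map $\phi\colon N\to U\ot V$ is the same datum as a family $\phi_l:=(u_l^*\ot\Id_V)\circ\phi\colon N\to V$, $l=1,\dots,m$, via $\phi=\sum_l u_l\ot\phi_l$.

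The key computation, which I would carry out first, is to translate the two morphism conditions in ${}_Q\mathcal P\Mm$ into conditions on the $\phi_l$. Using the explicit actions \equref{0.0.1}--\equref{0.0.2} on $U\ot V$ and comparing coefficients of the basis vectors $u_l$ (legitimate since $\{u_l\}$ is a $k$-basis), compatibility of $\phi$ with $\triangleright$ and with $\rightharpoonup$ becomes, for all $i\in I$, $n\in N$ and $l=1,\dots,m$,
\[
\phi_l(f_i\triangleright n)=\sum_{k,j}p^l_{jk}\,x_{ji}\cdot\phi_k(n),\qquad \phi_l(f_i\rightharpoonup n)=\sum_{k,j}q^l_{jk}\,x_{ji}\cdot\phi_k(n).
\]
This dictates the relations to impose. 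I would therefore define $\mathcal U(U,N)$ as the free left $A$-module on the vector space $U^*\ot N$, with typical generator $\overline{u^*\ot n}$, modulo the $A$-submodule generated by
\[
\overline{u_l^*\ot(f_i\triangleright n)}-\sum_{k,j}p^l_{jk}\,x_{ji}\cdot\overline{u_k^*\ot n},\qquad \overline{u_l^*\ot(f_i\rightharpoonup n)}-\sum_{k,j}q^l_{jk}\,x_{ji}\cdot\overline{u_k^*\ot n}
\]
for all $i\in I$, $n\in N$, $l=1,\dots,m$. The unit of the prospective adjunction is $\iota_N\colon N\to U\ot\mathcal U(U,N)$, $\iota_N(n)=\sum_l u_l\ot\overline{u_l^*\ot n}$, which the relations force (tautologically, by taking $V=\mathcal U(U,N)$ and $\psi=\Id$ below) to be a morphism in ${}_Q\mathcal P\Mm$.

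With this in place the adjunction is essentially formal. An $A$-linear map $\psi\colon\mathcal U(U,N)\to V$ is determined by its values $\psi(\overline{u_l^*\ot n})$, which by freeness may be prescribed arbitrarily subject only to killing the two displayed families; setting $\phi_l:=\psi(\overline{u_l^*\ot -})$ and $\phi:=\sum_l u_l\ot\phi_l$, these relations become verbatim the two morphism conditions above, so $\psi\mapsto(\Id_U\ot\psi)\circ\iota_N$ is a well-defined bijection onto $\Hom_{{}_Q\mathcal P\Mm}(N,U\ot V)$, with inverse sending $\phi$ to the $A$-map induced on generators by its components $\phi_l$. Naturality in $V$ is immediate from the formula $\phi=(\Id_U\ot\psi)\circ\iota_N$, and naturality in $N$ follows by checking the induced map $\mathcal U(U,u)$ on generators for a morphism $u\colon N_1\to N_2$, exactly as $\mathcal P(P,-)$ was made functorial in \thref{adj}; this produces the left adjoint $\mathcal U(U,-)\colon{}_Q\mathcal P\Mm\to{}_A\Mm$.

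The part requiring the most care, rather than a genuine obstacle, is the coefficient-matching step: one must be sure that comparing coefficients of $u_l$ really yields conditions that are both necessary \emph{and} sufficient (it does, because $\{u_l\}$ is a $k$-basis and the $x_{ji}$ act as elements of the commutative algebra $A$), and that the submodule of relations is the correct one for the assignment on generators to descend — both points are routine but are precisely where an index or sign slip would hide. I note that one could instead argue abstractly: ${}_A\Mm$ and ${}_Q\mathcal P\Mm\simeq{}_{Q^e}\Mm$ (\reref{freeposs}) are Grothendieck categories and $U\ot-$ preserves all limits since $U$ is finite dimensional, whence a left adjoint exists by the adjoint functor theorem. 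The explicit construction above is preferable, however, as it also produces the unit $\iota_N$ and matches the computational style of \thref{adj} and \thref{tensor_f}.
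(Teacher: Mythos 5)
Your construction is essentially the paper's own proof: the paper also realizes $\mathcal U(U,V)$ as the free $A$-module on generators $Y_{sr}$ (your $\overline{u_s^*\ot v_r}$, i.e.\ it just picks a basis of $N$ where you stay basis-free) modulo exactly the two families of relations you derive from coefficient-matching, defines the unit $\rho_V(v_r)=\sum_s u_s\ot y_{sr}$, and verifies the same hom-set bijection and functoriality. The argument is correct and the adjoint-functor-theorem aside is a reasonable (if unnecessary) alternative.
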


\begin{proof}
Let $\{u_1, \cdots, u_m\}$, $m \in \NN^{*}$, be a $k$-basis of the Poisson $P$-module $U$ and denote by $\gamma^{t}_{i,j}$, $\omega^{t}_{i,j} \in k$ the structure constants of $U$ with respect the two module structures, i.e.  for all $i = 1,
\cdots, n$, $j = 1, \cdots, m$ we have:
\begin{equation}\eqlabel{const0.1}
e_i \,  \blacktriangleright \, u_{j}= \sum_{s=1}^m \,
\gamma_{i, j}^s \, u_s, \qquad e_i\, \curvearrowright\, u_{j} = \sum_{s=1}^n \,
\omega_{i, j}^s \, u_s
\end{equation}
where  $\{e_1, \cdots, e_n\}$ is a $k$-basis of $P$. The left adjoint
${\mathcal U} (U, \, - ) \colon {}_Q{\mathcal P} \Mm \to {}_A \Mm$ of the tensor product functor $U \ot -$ will be constructed as follows. First, consider $(V, \vdash,\looparrowright) \in {}_Q{\mathcal P} \Mm$ and $\{v_{r} ~|~ r \in J\}$ its $k$-basis. For all $j \in I$ and $r \in J$ we can find two finite subsets $W_{j,r}$ and $T_{j,r}$ of $J$ such that:
\begin{equation}\eqlabel{const0.2}
f_j \,  \vdash \, v_{r}= \sum_{t \in W_{j,r}} \,
\sigma_{j, r}^t \, v_t, \qquad f_j\, \looparrowright\, v_{r} = \sum_{l \in T_{j,r}} \,
\eta_{j, r}^l \, v_l
\end{equation}
where $\sigma_{j, r}^t$, $\eta_{j, r}^l \in k$ for all $j \in I$, $r \in J$, $t \in W_{j,r}$ and $l \in T_{j,r}$ (recall that
$\{f_i \, | \, i \in I\}$ is a $k$-basis in $Q$). Consider now $\overline{{\mathcal U}}(U, V)$ to be the free $A$-module generated
by the set $\{Y_{i j} ~|~ i = 1, \cdots, m,\, j \in J\}$ and denote by ${\mathcal U}(U, V)$ the quotient of
$\overline{{\mathcal U}}(U, V)$ by its $A$-submodule generated by the following elements:
\begin{eqnarray}
\sum_{p \in
W_{j, i}} \, \sigma_{j, i}^p \, Y_{sp} - \sum_{t = 1}^{m}\sum_{r= 1}^{n}\,
\gamma_{r, t}^s \, x_{r j} \diamond Y_{t i} \eqlabel{0.3}\\
\sum_{p \in
T_{j, i}} \, \eta_{j, i}^p \, Y_{sp} - \sum_{t = 1}^{m}\sum_{r= 1}^{n}\,
\omega_{r, t}^s \, x_{r j} \diamond Y_{t i} \eqlabel{0.4}
\end{eqnarray}
for all $s = 1, \cdots, m$, $i \in J$ and $j \in I$, where $\diamond$ denotes the $A$-module action on $\overline{{\mathcal U}}(U, V)$.

Denoting $y_{tj} := \widehat{Y_{tj}}$, where $\widehat{Y_{tj}}$ stands for the equivalence class of ${Y_{tj}}$ in the quotient
module ${\mathcal U} (U, \, V)$, it follows that the relations below hold in the $A$-module ${\mathcal U} (U, \, V)$:
\begin{eqnarray}
\sum_{p \in
W_{j, i}} \, \sigma_{j, i}^p \, y_{sp} = \sum_{t = 1}^{m}\sum_{r= 1}^{n}\,\gamma_{r, t}^s \, x_{r j} \diamond y_{t i} \eqlabel{0.5}\\
\sum_{p \in
T_{j, i}} \, \eta_{j, i}^p \, y_{sp} = \sum_{t = 1}^{m}\sum_{r= 1}^{n}\,
\omega_{r, t}^s \, x_{r j} \diamond y_{t i} \eqlabel{0.6}
\end{eqnarray}
for all $s = 1, \cdots, m$, $i \in J$ and $j \in I$. Consider now the following linear map:
\begin{equation}\eqlabel{0.7}
\rho_{V} \colon V \to U \ot {\mathcal U} (U, \, V), \quad \rho_{V}
(v_r) := \sum_{s=1}^m \, u_s \ot y_{sr},  \quad {\rm for\,\,
all}\,\, r\in J.
\end{equation}

Note that $\rho_{V}$ is a Poisson $Q$-module map; indeed, for all $j \in I$ and $i \in J$ we have:
\begin{eqnarray*}
&&\rho_{V}(\underline{f_{j} \vdash \, v_{i}}) \stackrel{\equref{const0.2}}= \rho_{V}\bigl(\sum_{p \in W_{j,i}} \sigma_{ji}^{p}\, v_{p} \bigl) = \sum_{p \in W_{j,i}}\sum_{s = 1}^{m}\, \sigma_{ji}^{p}\,u_{s} \ot y_{sp} = \sum_{s=1}^{m} \bigl(u_{s} \ot \underline{\sum_{p \in W_{j,i}} \sigma_{ji}^{p}\, y_{sp}}\bigl)\\
&&\hspace*{-9mm} \stackrel{\equref{0.5}} =  \sum_{s, t = 1}^{m} \sum_{r= 1}^{n}\, \gamma_{r, t}^s\, u_{s} \ot
  x_{r j} \diamond y_{t i} =  \sum_{t = 1}^{m} \sum_{r= 1}^{n}\, \underline{\bigl(\sum_{s=1}^{m}\gamma_{r, t}^s\, u_{s}\bigl)} \ot
  x_{r j} \diamond y_{t i} \stackrel{\equref{const0.1}} =  \sum_{t=1}^{m} \underline{\sum_{r=1}^{n}  e_r \,  \blacktriangleright \, u_{t} \ot
  x_{r j} \diamond y_{t i}}\\
&&\hspace*{-9mm} \stackrel{\equref{0.0.1}}  = \sum_{t=1}^{m} f_{j} \triangleright (u_{t} \ot y_{ti}) =  f_{j}  \triangleright \sum_{t=1}^{m} u_{t} \ot y_{ti} \stackrel{\equref{0.7}}= f_{j}  \triangleright \rho_{V}(v_{i})
\end{eqnarray*}
and
\begin{eqnarray*}
&&\rho_{V}(\underline{f_{j} \looparrowright \, v_{i}}) \stackrel{\equref{const0.2}}= \rho_{V}\bigl(\sum_{p \in T_{j,i}} \eta_{ji}^{p}\, v_{p} \bigl) = \sum_{p \in T_{j,i}}\sum_{s = 1}^{m}\, \eta_{ji}^{p}\,u_{s} \ot y_{sp} = \sum_{s=1}^{m}\bigl(u_{s} \ot \underline{\sum_{p \in T_{j,i}} \eta_{ji}^{p}\, y_{sp}}\bigl)\\
&&\hspace*{-12mm} \stackrel{\equref{0.6}} =  \sum_{s, t = 1}^{m}\sum_{r= 1}^{n}\, \omega_{r, t}^s\, u_{s} \ot
  x_{r j} \diamond y_{t i} =  \sum_{t = 1}^{m}\sum_{r= 1}^{n}\, \underline{\bigl(\sum_{s=1}^{m}\omega_{r, t}^s\, u_{s}\bigl)} \ot
  x_{r j} \diamond y_{t i} \stackrel{\equref{const0.1}} =  \sum_{t=1}^{m} \underline{\sum_{r=1}^{n}  e_r \, \curvearrowright \, u_{t} \ot
  x_{r j} \diamond y_{t i}} \\
&&\hspace*{-12mm} \stackrel{\equref{0.0.2}}  = \sum_{t=1}^{m} f_{j} \rightharpoonup (u_{t} \ot y_{ti}) =  f_{j} \rightharpoonup \sum_{t=1}^{m} u_{t} \ot y_{ti} \stackrel{\equref{0.7}}= f_{j} \rightharpoonup\rho_{V}(v_{i})
\end{eqnarray*}
which concludes our last claim. We can now define for all Poisson $Q$-modules $V$ and all $A$-modules $X$, a bijection between
${\rm Hom}_{{}_A\mathcal{M}} \, \bigl(
{\mathcal U} (U, \, V), \, X\bigl)$ and ${\rm Hom}_{{}_Q{\mathcal P} \Mm} \, (V, \, U \ot X)$ as follows:
\begin{eqnarray}
\Gamma_{V, X} \colon {\rm Hom}_{{}_A\mathcal{M}} \, (
{\mathcal U} (U, \, V), \, X) \to {\rm
Hom}_{{}_Q{\mathcal P} \Mm} \, (V, \, U \ot X), \,\,  \Gamma_{V, X}(\theta) := ({\rm Id}_{U} \ot \theta) \circ \rho_{V} \eqlabel{gammaadj}
\end{eqnarray}
for all $A$-module morphisms $\theta \colon {\mathcal U}(U, V) \to X$. To this end, let $g \colon V \to U\ot X$ be a Poisson $Q$-module
map; we need to find a unique $A$-module map $\theta \colon {\mathcal U} (U, \, V) \to X$ such that
$g = \bigl( {\rm Id}_{U} \ot \theta \bigl) \circ \, \rho_{V}$. Let $\{z_{sr} \, | \, s = 1, \cdots, m, r\in J \}$ be a family of elements of
$X$ such that for all $r\in J$ we have:
\begin{equation}\eqlabel{constfmor}
g(v_r) = \sum_{s=1}^m \, u_s \ot z_{sr}.
\end{equation}
Furthermore, as $g \colon V \to U\ot X$ is a Poisson $Q$-modules
map, we can easily prove that the following compatibilities hold for all $s = 1, \cdots, m$, $i \in J$ and $j \in I$:
\begin{eqnarray}
\sum_{p \in
W_{j, i}} \, \sigma_{j, i}^p \, z_{sp} = \sum_{t = 1}^{m}\sum_{r= 1}^{n}\,
\gamma_{r, t}^s \, x_{r j} \cdot z_{t i}  \eqlabel{rel0.0.3}\\
\sum_{p \in
T_{j, i}} \, \eta_{j, i}^p \, z_{sp} = \sum_{t = 1}^{m}\sum_{r= 1}^{n}\,
\omega_{r, t}^s \, x_{r j} \cdot z_{t i} \eqlabel{rel0.0.4}
\end{eqnarray}
where $\cdot$ denotes the $A$-module action on $X$. The universal property of the free module yields a unique $A$-module map $\overline{\theta} \colon \overline{{\mathcal U}}(U, V) \to X$ such that $\overline{\theta}(Y_{sr}) = z_{sr}$, for all $s = 1, \cdots, m$ and $r\in J$. Moreover, ${\rm Ker} (\overline{\theta})$ contains the $A$-submodule of $\overline{{\mathcal U}}(U, V)$ generated by the elements listed in \equref{0.3} and \equref{0.4}. Indeed, as $\theta \colon {\mathcal U}(U, V) \to X$ is a morphism of $A$-modules we have:
\begin{eqnarray*}
\overline{\theta}\bigl(\sum_{p \in
W_{j, i}} \, \sigma_{j, i}^p \, Y_{sp} - \sum_{t = 1}^{m}\sum_{r= 1}^{n}\,
\gamma_{r, t}^s \, x_{r j} \diamond Y_{t i} \bigl) = \underline{\sum_{p \in
W_{j, i}} \, \sigma_{j, i}^p \, z_{sp} - \sum_{t = 1}^{m}\sum_{r= 1}^{n}\,
\gamma_{r, t}^s \, x_{r j} \cdot z_{t i}}
\stackrel{\equref{rel0.0.3}} = 0\\
\overline{\theta}\bigl(\sum_{p \in
T_{j, i}} \, \eta_{j, i}^p \, Y_{sp} - \sum_{t = 1}^{m}\sum_{r= 1}^{n}\,
\omega_{r, t}^s \, x_{r j} \diamond Y_{t i}\bigl)) = \underline{\sum_{p \in
T_{j, i}} \, \eta_{j, i}^p \, z_{sp} - \sum_{t = 1}^{m}\sum_{r= 1}^{n}\,
\omega_{r, t}^s \, x_{r j} \cdot z_{t i}} \stackrel{\equref{rel0.0.4}} = 0
\end{eqnarray*}
for all $s = 1, \cdots, m$, $i \in J$ and $j \in I$. This shows that there exists a unique $A$-module map $\theta \colon {\mathcal U}(U, V) \to X$ such that $\theta(y_{sr}) = z_{sr}$, for all $s = 1, \cdots, m$ and $r\in J$. Furthermore, this implies that for all $r \in J$ we have:
\begin{eqnarray*}
\bigl({\rm Id}_{U} \ot \theta \bigl) \circ \,
\rho_{V} (v_r) = \bigl({\rm Id}_{U} \ot
\theta \bigl) \bigl( \sum_{s=1}^{m} \, u_s \ot y_{sr} \bigl) = \sum_{s=1}^{m} \, u_s \ot z_{sr} \stackrel{\equref{constfmor}}
= g (v_{r})
\end{eqnarray*}
$\theta$ is obviously unique with this property and therefore the map
$\Gamma_{V, X}$ is bijective.

We are left to show that given a finite dimensional Poisson $P$-module $U$, assigning $V \mapsto {\mathcal U} (U, \,
V)$ defines a functor  ${\mathcal U}
(U, \, - ) \colon {}_Q{\mathcal P} \Mm \to {}_A \Mm$. Indeed, let $h \colon
V_{1} \to V_2$ be a Poisson $Q$-modules map. The bijectivity of $\Gamma_{V_{1},\, {\mathcal U} (U, \,
V_2)}$ applied for the Poisson $Q$-modules map $\rho_{V_2} \circ h \colon V_{1} \to U \ot \,{\mathcal U} (U, \,
V_2)$, yields a unique $A$-module map $\overline{h} \colon {\mathcal U} (U, \,
V_1) \to {\mathcal U} (U, \,
V_2)$ such that:
$$
\bigl( {\rm Id}_{U} \ot \overline{h}
\bigl) \circ \, \rho_{V_1} = \rho_{V_2}
\circ h
$$
By setting ${\mathcal U}
(U, \, h)$ to be this unique morphism $\overline{h}$, the functor ${\mathcal U}
(U, \, - )$ is fully defined. Moreover, it
can now be easily checked that ${\mathcal U} (U, \, -
)$ is indeed a functor and that $\Gamma_{V, \, X}$ is natural in both variables. Therefore, ${\mathcal U}
(U, \, - )$ is the left adjoint of the functor $U \ot -$.
\end{proof}

Keeping the notations and the assumptions of \thref{tensor_f} we can prove the following:

\begin{theorem}\thlabel{adjoint_tens_2}
Let $P$ and $Q$ be two Poisson algebras such that $P$ is finite dimensional, $A = {\mathcal P}(P, \, Q)$ and let $V = (V, \cdot)$ be a finite dimensional $A$-module.
Then the functor $ - \ot V \colon {}_P{\mathcal P} \Mm \to {}_Q{\mathcal P} \Mm$ has a left adjoint
${\mathcal V} (V, \, - ) \colon {}_Q{\mathcal P} \Mm \to {}_P{\mathcal P} \Mm$.
\end{theorem}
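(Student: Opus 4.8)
The plan is to mirror, with the appropriate modifications, the construction carried out in the proof of \thref{adjoint_tens}, interchanging the roles of the two module structures involved: there the fixed datum was a Poisson $P$-module and the output was an $A$-module, whereas here the fixed datum is the $A$-module $V$ and the output ${\mathcal V}(V, W)$ must itself be a Poisson $P$-module. The structural ingredient that makes this possible is the existence of \emph{free Poisson $P$-modules} and of their quotients by Poisson submodules generated by a prescribed set of relations, recorded in \reref{freeposs}; these replace the free $A$-modules used in \thref{adjoint_tens}.

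Concretely, retain the basis $\{e_1, \dots, e_n\}$ of $P$ with structure constants $\tau_{i,j}^s$, $\mu_{i,j}^s$ from \equref{const3.2}, fix a basis $\{v_1, \dots, v_q\}$ of the finite dimensional $A$-module $V$, and record the action of the algebra generators of $A$ on $V$ by scalars $\lambda_{sj,k}^l \in k$ defined through $x_{sj} \cdot v_k = \sum_{l=1}^q \lambda_{sj,k}^l \, v_l$. For an arbitrary Poisson $Q$-module $W$ with basis $\{w_r \mid r \in J\}$, denote by $\sigma_{j,r}^t$ and $\eta_{j,r}^t$ the (finitely supported) structure constants of its associative and Lie actions. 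I would then let $\overline{{\mathcal V}}(V, W)$ be the free Poisson $P$-module on $\{Y_{lr} \mid l = 1, \dots, q, \, r \in J\}$, with $P$-actions written $\blacktriangleright$ and $\curvearrowright$, and set ${\mathcal V}(V, W)$ to be its quotient by the Poisson $P$-submodule generated by
\[
\sum_t \sigma_{j,r}^t \, Y_{lt} - \sum_{k=1}^{q} \sum_{s=1}^{n} \lambda_{sj,k}^l \, (e_s \blacktriangleright Y_{kr}), \qquad \sum_t \eta_{j,r}^t \, Y_{lt} - \sum_{k=1}^{q} \sum_{s=1}^{n} \lambda_{sj,k}^l \, (e_s \curvearrowright Y_{kr})
\]
for all $l = 1, \dots, q$, $r \in J$ and $j \in I$. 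Writing $y_{lr}$ for the class of $Y_{lr}$, the candidate unit of the adjunction is the linear map $\rho_W \colon W \to {\mathcal V}(V, W) \ot V$, $\rho_W(w_r) := \sum_{l=1}^q y_{lr} \ot v_l$.

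The verification then reduces to three steps, each one transcribing the corresponding argument in \thref{adjoint_tens}. First, a direct computation using the defining relations of ${\mathcal V}(V, W)$ and the formulas \equref{0.0.1}--\equref{0.0.2} for the Poisson $Q$-module structure of ${\mathcal V}(V, W) \ot V$ shows that $\rho_W$ is a morphism in ${}_Q{\mathcal P}\Mm$. Second, for any Poisson $P$-module $Z$ one establishes the bijection
\[
\Gamma_{W, Z} \colon {\rm Hom}_{{}_P{\mathcal P}\Mm}\bigl({\mathcal V}(V, W),\, Z\bigr) \to {\rm Hom}_{{}_Q{\mathcal P}\Mm}\bigl(W,\, Z \ot V\bigr), \quad \Gamma_{W, Z}(\theta) := (\theta \ot {\rm Id}_V) \circ \rho_W,
\]
as follows: given a Poisson $Q$-module map $g \colon W \to Z \ot V$, expand $g(w_r) = \sum_{k} z_{kr} \ot v_k$ with $z_{kr} \in Z$; imposing that $g$ commute with both $Q$-actions yields exactly the two families of identities obtained from the generators above by replacing $Y_{kr}$ with $z_{kr}$ (and with $\blacktriangleright$, $\curvearrowright$ now the actions of $Z$). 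Hence the universal property of the free Poisson $P$-module produces a unique morphism $\overline{\theta} \colon \overline{{\mathcal V}}(V, W) \to Z$ with $\overline{\theta}(Y_{lr}) = z_{lr}$, which annihilates the relation submodule and therefore descends to a Poisson $P$-module map $\theta \colon {\mathcal V}(V, W) \to Z$ satisfying $\Gamma_{W, Z}(\theta) = g$. Uniqueness is immediate, since the $y_{lr}$ generate ${\mathcal V}(V, W)$ as a Poisson $P$-module and $\{v_l\}$ is a basis of $V$. Third, for a morphism $h \colon W_1 \to W_2$ in ${}_Q{\mathcal P}\Mm$, applying $\Gamma_{W_1,\, {\mathcal V}(V, W_2)}$ to $\rho_{W_2} \circ h$ defines ${\mathcal V}(V, h)$, and one checks routinely that ${\mathcal V}(V, -)$ is a functor and that $\Gamma_{W, Z}$ is natural in both variables; thus ${\mathcal V}(V, -)$ is left adjoint to $- \ot V$.

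The only genuinely new point compared with \thref{adjoint_tens} — and hence the step to watch — is that the object being constructed must be a \emph{Poisson} $P$-module, carrying two interlocking actions rather than a single associative one. This is dealt with by working throughout inside free Poisson $P$-modules and their Poisson submodule quotients: because $\overline{\theta}$ is by construction a morphism of Poisson $P$-modules, the induced map $\theta$ automatically respects both the associative and the Lie $P$-action, so no separate compatibility needs to be checked for $\theta$ itself. The remaining computational content is the parallel bookkeeping of the two families of relations, handled exactly as the compatibilities \equref{Pmod1}--\equref{Pmod2} were treated side by side in \thref{tensor_f}.
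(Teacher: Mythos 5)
Your proposal is correct and follows essentially the same route as the paper: both define ${\mathcal V}(V,W)$ as the free Poisson $P$-module (via \reref{freeposs}) on generators indexed by a basis of $W$ times a basis of $V$, modulo the Poisson submodule generated by the same two families of relations (yours match \equref{rel_noulmod1}--\equref{rel_noulmod2} after the harmless swap of index order), with the same unit map and the same adjunction bijection. Your write-up is in fact slightly more explicit than the paper's sketch about why the descended map is automatically a morphism of Poisson $P$-modules.
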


\begin{proof} Since the proof goes in the same manner as the one of \thref{adjoint_tens}, we only indicate its main steps.
Let $\{v_1, \cdots, v_m\}$, $m \in \NN^{*}$, be a $k$-basis of the $A$-module $V$ and denote by $\gamma_{i,j,s}^{t} \in k$ the structure constants of $V$, i.e.  for all $i = 1,
\cdots, n$, $j\in J$ and $s = 1, \cdots, m$ we have:
\begin{equation*}\eqlabel{const0.1.bb}
x_{ij} \,  \cdot \, v_{s}= \sum_{t=1}^m \,
\gamma_{i, j, s}^t \, v_t
\end{equation*}
Let  $(W, \vdash, \looparrowright) \in {}_Q{\mathcal P} \Mm$ be a Poisson $Q$-module and $\{w_{r} ~|~ r \in J\}$ its $k$-basis. For all $j \in I$ and $r \in J$ we can find two finite subsets $S_{j,r}$ and $T_{j,r}$ of $J$ such that:
\begin{equation*}\eqlabel{const0.2.bb}
f_j \,  \vdash \, w_{r}= \sum_{t \in S_{j,r}} \,
\sigma_{j, r}^t \, w_t, \qquad f_j\, \looparrowright\, w_{r} = \sum_{s \in T_{j,r}} \,
\eta_{j, r}^s \, w_s
\end{equation*}
where $\sigma_{j, r}^t$, $\eta_{j, r}^s \in k$, for all $j \in I$, $r \in J$, $t \in S_{j,r}$ and $s \in T_{j,r}$. Using \reref{freeposs} we can now define ${\mathcal V} (V, \, W ) = \bigl( {\mathcal V} (V, \, W ), \, \blacktriangleright, \, \curvearrowright  \bigl)$ as the free Poisson $P$-module generated by the set $\{y_{j i} ~|~  j \in J, \, i = 1, \cdots, m \}$ subject to the following relations:
\begin{eqnarray}
&& \sum_{t \in S_{j,r}} \, \sigma_{j, r}^t \, y_{ra} =
\sum_{i = 1}^{n}\sum_{b = 1}^{m} \, \gamma_{i, j, b}^a \, (e_i \blacktriangleright y_{rb} ) \eqlabel{rel_noulmod1} \\
&& \sum_{s \in T_{j, r}} \, \eta_{j, r}^s \, y_{sa} = \sum_{b = 1}^{m} \sum_{i = 1}^{n} \,
\gamma_{i, j, b}^a \,\, (e_i \curvearrowright y_{rb}) \eqlabel{rel_noulmod2}
\end{eqnarray}
for all $j\in I$, $r\in J$ and $a = 1, \cdots, m$. Now relations \equref{rel_noulmod1} and \equref{rel_noulmod2} allow us to easily prove that the linear map
defined for any $r\in J$ by:
$$
\eta_ W : W \to {\mathcal V} (V, \, W ) \ot V, \qquad \eta_ W (w_r) := \sum_{s=1}^m \, y_{rs} \ot v_s
$$
is a morphism of Poisson $Q$-modules and, analogous to the proof of \thref{adjoint_tens}, the canonical map
$$
{\rm Hom}_{{}_P{\mathcal P} \Mm} \, ( {\mathcal V} (V, \, W), \, U) \to {\rm Hom}_{{}_Q{\mathcal P} \Mm} \, (W, \, U \ot V), \qquad \theta \mapsto (\theta \ot {\rm Id}_{V}) \circ \eta_{W}
$$
is a natural isomorphism for any Poisson $Q$-module $W$ and any Poisson $P$-module $U$. The proof is now finished.
\end{proof}

Before giving some examples, it will be useful to observe the following: since the bracket on the Lie algebras on $P$ and $Q$
is skew-symmetric we have $\mu_{i, i}^s = \beta_{i,i}^u = 0$, $\mu_{i, j}^s = - \mu_{j, i}^s$ and $\beta_{i,j}^u = - \beta_{j,i}^u$. Consequently, relations \equref{rel2} are automatically fulfilled for $i = j$.

\begin{examples} \exlabel{exalguniv}
1. Let $P$ and $Q$ be two Poisson algebras such that $P$ is finite dimensional and the associative algebra structures on both $P$ and $Q$ are the trivial ones (i.e.
$xy := 0$, for any $x$, $y \in P$ (resp. $Q$)). Thus $P$ and $Q$ are just Lie algebras viewed as Poisson algebras. Then, ${\mathcal P}(P, \, Q)$
is exactly the universal algebra ${\mathcal A}(P, \, Q)$ of the two Lie algebras as constructed in \cite[Theorem 2.1]{am20}. In particular, if the Lie algebras structures
on $P$ and $Q$ are also the abelian ones, then ${\mathcal P}(P, \, Q) \cong k [X_{si} \, | s = 1, \cdots, n, \, i\in I]$, where $n = {\rm dim}_k (P)$ and $|I| = {\rm dim}_k (Q)$.

In general, ${\mathcal P}(P, \, Q)$ is the quotient of the universal algebra ${\mathcal A}(P, \, Q)$ of the two Lie algebras $P$ and $Q$, through the ideal generated by the relations listed in  \equref{rel1}.

2. Let $P :=k$ be the $1$-dimensional Poisson algebra, i.e. the constant structures are $\tau_{1,1}^1 = 1$ and $\mu_{1,1}^1 = 0$. For any Poisson algebra
$Q$ with a $k$-basis $\{f_i \, | \, i \in I\}$ and the constant structures $\alpha_{i, j}^u$, $\beta_{i, j}^u \in k$ given by \equref{const3.4}, the universal algebra
${\mathcal P}(k, \, Q)$ is the algebra generated by the commuting variables $x_{i}$, $i \in I$, subject to the relations for any $i$, $j \in I$:
\begin{eqnarray*}
\sum_{u \in U_{i, j}} \, \alpha_{i, j}^u \, x_{u} = x_{i} x_{j}, \qquad
\sum_{u \in V_{i, j}} \, \beta_{i, j}^u \, x_{u} = 0. \eqlabel{rel0.2prima}
\end{eqnarray*}
The other way around, let $Q := k$ and $P$ an $n$-dimensional Poisson algebra with the constant structures $\{\tau_{i, j}^s \, | \, i, j, s = 1, \cdots, n \}$ and $\{\mu_{i, j}^s \, | \, i, j, s = 1, \cdots, n \}$ given by \equref{const3.2}. Then the universal algebra ${\mathcal P}(P, \, k)$ is the algebra generated by the commuting variables
$x_1, \cdots, x_n$ subject to the relations:
\begin{eqnarray*}
\sum_{s, t = 1}^n \, \tau_{s, t}^a \, x_{s} x_{t} = x_a, \qquad
\sum_{s, t = 1}^n \, \mu_{s, t}^a \, x_{s} x_{t} = 0
\end{eqnarray*}
for all $a = 1, \cdots, n$.

3. Let $k$ be a field of characteristic $\neq 2$, $P := k[X]/(X^2)$ viewed as a Poisson algebra with the abelian bracket and $Q := {\rm aff} (2, k)$ the affine
$2$-dimensional Lie algebra with basis $\{f_1, \, f_2\}$ and bracket given by $\left[f_1, \, f_2 \right] = f_2$ viewed as a Poisson algebra
with the trivial multiplication ($x y := 0$, for all $x$, $y \in Q$). Then:
\begin{eqnarray*}
{\mathcal P} \bigl( P, \, Q \bigl) &\cong& \, k [X_{11}, X_{12},
X_{21}, X_{22}]/(X_{11}^2, \, X_{12}, \, X_{11}X_{21}, \, X_{22}) \\
& \cong& \, k[X, Y]/(X^2, XY)
\end{eqnarray*}
Indeed, the only non-zero structure constants of $P$ and $Q$ are: $\tau_{1,1}^1 = \tau_{1,2}^2 = \tau_{2,1}^1 = 1$ and
$\beta_{1,2}^2 = 1 = - \beta_{2,1}^2$. A direct computation shows that, among the sixteen compatibilities resulting from the defining relations \equref{rel1} and \equref{rel2} of ${\mathcal P} \bigl( P, \, Q \bigl)$, after eliminating the redundant relations the only remaining ones are the following:
$x_{11}^2 = 0$, $x_{12} = 0$, $2\, x_{11}x_{21} = 0$ and $x_{22} = 0$. The conclusion now follows.
\end{examples}

\section{The universal coacting bialgebra on a finite dimensional Poisson algebra. Applications} \selabel{sect3}

Let $P$ be a finite dimensional Poisson algebra having $\{e_1, \cdots, e_n\}$ as a $k$-basis. The description of the commutative
algebra ${\mathcal P} (P) := {\mathcal P} (P, \, P) $ given by \thref{adj} is the following: if $\{\tau_{i, j}^s \, | \, i, j, s = 1, \cdots, n \}$ and
$\{\mu_{i, j}^s \, | \, i, j, s = 1, \cdots, n \}$ are the structure constants of $P$ with respect to the associative and
Lie structures as given by \equref{const3.2}, then ${\mathcal P} (P)$ is the free commutative algebra generated by $\{x_{si} \, | \, s, i
= 1, \cdots, n, \}$ and the relations:
\begin{equation}\eqlabel{relatii2}
\sum_{u = 1}^n  \, \tau_{i, j}^u \, x_{au} = \sum_{s, t = 1}^n \,
\tau_{s, t}^a \, x_{si} x_{tj}, \qquad
\sum_{u = 1}^n \, \mu_{i, j}^u \, x_{au} = \sum_{s, t = 1}^n \, \mu_{s, t}^a \, x_{si} x_{tj}
\end{equation}
for all $a$, $i$, $j = 1, \cdots, n$. Furthermore, the map
\begin{equation}\eqlabel{unitadj2}
\eta_{P} : P \to P \ot {\mathcal P} (P), \quad \eta_{P} (e_i) := \sum_{s=1}^n
\, e_s \ot x_{si}, \quad {\rm for\,\, all}\,\, i = 1, \cdots, n
\end{equation}
is a Poisson algebra homomorphism. By considering $Q := P$ in the bijection described in \equref{adjp}
we obtain:

\begin{corollary}\colabel{initialobj}
Let $P$ be a finite dimensional Poisson algebra. Then for any comutative algebra $A$
and any Poisson algebra homomorphism $f : P \to P \ot A$, there exists a
unique algebra homomorphism $\theta: {\mathcal P} (P) \to A$ such that $f =
({\rm Id}_{P} \ot \theta) \circ \eta_{P}$.
\end{corollary}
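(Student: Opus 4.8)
The plan is to observe that this statement is nothing more than the special case $Q := P$ of the bijectivity of the map $\gamma_{Q,\,A}$ from \equref{adjp}, which was already established in the proof of \thref{adj}. No genuinely new argument is required; the only task is to verify that the objects appearing here are the literal specializations of those built in \thref{adj}.

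First I would set $Q := P$ and choose the \emph{same} basis $\{e_1, \cdots, e_n\}$ for both copies, so that the index set $I = \{1, \cdots, n\}$ is finite and the subsets $U_{i,j}$, $V_{i,j} \subseteq I$ may all be taken to be $\{1, \cdots, n\}$. Under this identification the structure constants of the target Poisson algebra coincide with those appearing in the quadratic terms: $\alpha_{i,j}^u = \tau_{i,j}^u$ and $\beta_{i,j}^u = \mu_{i,j}^u$. Consequently the defining relations \equref{rel1}--\equref{rel2} of the quotient algebra ${\mathcal P}(P, \, Q)$ become exactly the relations \equref{relatii2} defining ${\mathcal P}(P) = {\mathcal P}(P, \, P)$, and the structure map $\eta_Q$ of \equref{unitadj} becomes the map $\eta_P$ of \equref{unitadj2}. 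Thus the pair $\bigl({\mathcal P}(P),\, \eta_P\bigl)$ displayed just before the corollary is precisely $\bigl({\mathcal P}(P, \, P),\, \eta_{P}\bigl)$.

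Having matched the data, I would simply invoke \thref{adj}: since $P$ is finite dimensional, the map
\[
\gamma_{P,\,A} \colon {\rm Hom}_{\rm Alg_k}\,\bigl({\mathcal P}(P),\, A\bigl) \to {\rm Hom}_{\rm Poiss_k}\,(P,\, P \ot A), \qquad \gamma_{P,\,A}(\theta) = \bigl({\rm Id}_P \ot \theta\bigl) \circ \eta_P,
\]
is a bijection for every commutative algebra $A$. Given a Poisson algebra homomorphism $f \colon P \to P \ot A$, surjectivity of $\gamma_{P,\,A}$ yields an algebra homomorphism $\theta \colon {\mathcal P}(P) \to A$ with $f = \bigl({\rm Id}_P \ot \theta\bigl) \circ \eta_P$, while injectivity of $\gamma_{P,\,A}$ guarantees that this $\theta$ is the unique such homomorphism. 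This is exactly the asserted existence and uniqueness.

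I do not anticipate any real obstacle here, as the corollary is a direct instantiation of an already-proved adjunction; the only point demanding care is the bookkeeping that confirms the relations \equref{relatii2} are genuinely the $Q = P$ form of \equref{rel1}--\equref{rel2}, so that ${\mathcal P}(P)$ as freshly defined agrees with ${\mathcal P}(P, \, P)$ and the corollary follows with no further computation.
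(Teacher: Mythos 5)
Your proposal is correct and is exactly the paper's own argument: the corollary is stated immediately after the identification of $\bigl({\mathcal P}(P),\eta_P\bigr)$ with $\bigl({\mathcal P}(P,P),\eta_{P}\bigr)$ and is obtained by taking $Q:=P$ in the bijection \equref{adjp} from \thref{adj}. Your extra bookkeeping verifying that \equref{relatii2} is the $Q=P$ specialization of \equref{rel1}--\equref{rel2} is a sound (if slightly more explicit) version of what the paper does implicitly.
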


Next we show that the commutative algebra ${\mathcal P} (P)$ can be endowed with a bialgebra structure such that
$(P, \, \eta_{P})$ becomes a right Poisson ${\mathcal P} (P)$-comodule algebra.

\begin{proposition} \prlabel{bialgebra}
Let $P$ be a Poisson algebra of dimension $n$. Then
there exists a unique bialgebra structure on ${\mathcal P} (P)$ such that the Poisson algebra homomorphism
$\eta_{P} : P \to P \ot {\mathcal P} (P)$ becomes a right ${\mathcal P} (P)$-comodule structure on $P$. The comultiplication and the counit on ${\mathcal P}
(P)$ are given by
\begin{equation} \eqlabel{deltaeps}
\Delta (x_{ij}) = \sum_{s=1}^n \, x_{is} \ot x_{sj} \quad {\rm
and} \quad  \varepsilon (x_{ij}) = \delta_{i, j}
\end{equation}
for all $i$, $j=1, \cdots, n$.
\end{proposition}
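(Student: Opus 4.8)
The plan is to extract both the comultiplication and the counit from the universal property recorded in \coref{initialobj}; this single tool will simultaneously yield their existence, their explicit form \equref{deltaeps}, the comodule compatibility, and the uniqueness of the whole structure.

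First I would build $\Delta$. The algebra $A := {\mathcal P}(P) \ot {\mathcal P}(P)$ is commutative, and the map $f := (\eta_P \ot {\rm Id}_{{\mathcal P}(P)}) \circ \eta_P \colon P \to P \ot ( {\mathcal P}(P) \ot {\mathcal P}(P) )$ is a Poisson algebra homomorphism: $\eta_P$ is Poisson by \thref{adj}, and tensoring a Poisson map with the identity of a commutative algebra again yields a Poisson map for the structure \equref{2}. Hence \coref{initialobj} produces a unique algebra homomorphism $\Delta \colon {\mathcal P}(P) \to {\mathcal P}(P) \ot {\mathcal P}(P)$ with $({\rm Id}_P \ot \Delta) \circ \eta_P = (\eta_P \ot {\rm Id}) \circ \eta_P$. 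Evaluating both sides on $e_i$ via \equref{unitadj2} gives $\sum_t e_t \ot \Delta(x_{ti}) = \sum_{t,s} e_t \ot x_{ts} \ot x_{si}$, and comparing coefficients of the basis vectors $e_t$ yields the first formula in \equref{deltaeps}. For the counit I would apply \coref{morP} (the case $A = k$) to the Poisson map ${\rm Id}_P \colon P \to P \cong P \ot k$, obtaining a unique algebra map $\varepsilon \colon {\mathcal P}(P) \to k$ with $({\rm Id}_P \ot \varepsilon) \circ \eta_P = {\rm Id}_P$; evaluating on $e_i$ forces $\sum_s \varepsilon(x_{si})\, e_s = e_i$, whence $\varepsilon(x_{si}) = \delta_{s,i}$, the second formula in \equref{deltaeps}.

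With these formulas in hand, the bialgebra axioms reduce to a short check on generators, since $\Delta$ and $\varepsilon$ are algebra maps by construction (this is exactly the compatibility of the product with the coproduct and counit). For coassociativity one computes $(\Delta \ot {\rm Id})\Delta(x_{ij}) = \sum_{s,t} x_{is} \ot x_{st} \ot x_{tj} = ({\rm Id} \ot \Delta)\Delta(x_{ij})$, and for the counit $(\varepsilon \ot {\rm Id})\Delta(x_{ij}) = \sum_s \delta_{i,s}\, x_{sj} = x_{ij} = \sum_s x_{is}\,\delta_{s,j} = ({\rm Id} \ot \varepsilon)\Delta(x_{ij})$; as all maps involved are algebra homomorphisms, agreement on the generators $x_{ij}$ suffices. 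Moreover the two defining equations $({\rm Id}_P \ot \Delta)\circ \eta_P = (\eta_P \ot {\rm Id})\circ \eta_P$ and $({\rm Id}_P \ot \varepsilon)\circ \eta_P = {\rm Id}_P$ are precisely the coassociativity and counit axioms of a right comodule, so $(P, \eta_P)$ is a right ${\mathcal P}(P)$-comodule, and it is a comodule \emph{algebra} because $\eta_P$ is a Poisson map.

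Finally, uniqueness is immediate from the same universal property: if $(\Delta', \varepsilon')$ is any bialgebra structure on ${\mathcal P}(P)$ making $\eta_P$ a right comodule algebra, then $\Delta'$ and $\varepsilon'$ are algebra maps satisfying the very equations characterizing $\Delta$ and $\varepsilon$ above, so the uniqueness clause of \coref{initialobj} forces $\Delta' = \Delta$ and $\varepsilon' = \varepsilon$. The point requiring the most care is the verification that $f = (\eta_P \ot {\rm Id})\circ \eta_P$ is genuinely a Poisson homomorphism into $P \ot ( {\mathcal P}(P) \ot {\mathcal P}(P) )$, since this is what licenses the application of \coref{initialobj} and thereby makes $\Delta$ well defined as an algebra map; once this is secured, everything else is either a coefficient comparison or a one-line check on generators. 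Alternatively, coassociativity can be obtained without computing on generators at all, by observing that both $(\Delta \ot {\rm Id})\circ \Delta$ and $({\rm Id} \ot \Delta)\circ \Delta$, when inserted into $({\rm Id}_P \ot -)\circ \eta_P$, return the common threefold iterate $(\eta_P \ot {\rm Id} \ot {\rm Id})\circ(\eta_P \ot {\rm Id})\circ \eta_P$, so that the uniqueness in \coref{initialobj} applied with $A = {\mathcal P}(P) \ot {\mathcal P}(P) \ot {\mathcal P}(P)$ identifies them.
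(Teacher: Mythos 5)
Your proposal is correct and follows essentially the same route as the paper: both obtain $\Delta$ by applying the universal property of $({\mathcal P}(P),\eta_P)$ to $(\eta_P \ot {\rm Id})\circ\eta_P$, obtain $\varepsilon$ from the case $A=k$, read off the formulas \equref{deltaeps} by comparing coefficients on the basis $\{e_i\}$, and note that the two defining identities are exactly the comodule axioms. Your extra remarks (the explicit check of coassociativity/counit on generators, the uniqueness argument, and the verification that $(\eta_P\ot{\rm Id})\circ\eta_P$ is a Poisson map) only make explicit what the paper leaves as routine.
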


\begin{proof}
Consider the Poisson algebra homomorphism $(\eta_{P} \ot {\rm
Id}_{{\mathcal P} (P)} ) \, \circ \,
\eta_{P} \colon P \to
P \ot {\mathcal P} (P) \ot {\mathcal P}
(P)$. \coref{initialobj} yields a unique algebra homomorphism $\Delta :
{\mathcal P} (P) \to {\mathcal P} (P) \ot
{\mathcal P} (P)$ such that the following holds:
\begin{eqnarray} \eqlabel{delta}
({\rm Id}_{P}
\ot \Delta) \circ \eta_{P} = (\eta_{P} \ot {\rm
Id}_{{\mathcal P} (P)} ) \, \circ \,
\eta_{P}.
\end{eqnarray}
Applying \equref{delta} for each $e_i$, $i = 1, \cdots, n$ and using \equref{unitadj2} we obtain the following:
\begin{eqnarray*}
&& \sum_{t=1}^n \, e_t \ot \Delta (x_{ti}) = (\eta_{P}
\ot {\rm Id}) (\sum_{s=1}^n \, e_s \ot x_{si}) = \sum_{s=1}^n (
\sum_{t=1}^n \, e_t \ot x_{ts}) \ot x_{si}\\
&& = \sum_{t=1}^n \, e_t \ot (\sum_{s=1}^n x_{ts} \ot x_{si} )
\end{eqnarray*}
which comes down to $\Delta (x_{ti}) = \sum_{s=1}^n \, x_{ts} \ot x_{si}$,
for all $t$, $i=1, \cdots, n$. Note that $\Delta$ is obviously coassociative. In a similar fashion,
applying once again \coref{initialobj}, we obtain a unique algebra homomorphism $\varepsilon\colon
{\mathcal P} (P) \to k$ such that the following holds:
\begin{eqnarray} \eqlabel{epsilo}
({\rm Id}_{P} \ot \varepsilon) \circ \eta_{P} = {\rm can}
\end{eqnarray}
where ${\rm can} : P \to P
\ot k$ is the canonical isomorphism, ${\rm can} (x) = x \ot 1$,
for all $x\in P$. Applying \equref{epsilo} for each $e_i$, $i = 1, \cdots, n$, we obtain $\varepsilon
(x_{ij}) = \delta_{i, j}$, for all $i$, $j=1, \cdots, n$. It can be easily checked that
$\varepsilon$ is a counit for $\Delta$, and therefore ${\mathcal P} (P)$ is a bialgebra. Furthermore, \equref{delta} and \equref{epsilo} imply that the canonical map $\eta_{P} \colon P \to P
\ot {\mathcal P} (P)$ defines a right ${\mathcal P} (P)$-comodule structure on $P$.
\end{proof}

The key property of ${\mathcal P} (P)$ is the following Poisson algebra version of \cite[Theorem 2.11]{am20}:

\begin{theorem}\thlabel{univbialg}
Let $P$ be a finite dimensional Poisson algebra. Then, $({\mathcal P} (P), \, \eta_{P})$ is the initial object of the
category ${\rm CoactBialg}_P$ of all commutative bialgebras coacting on $P$ and we call it the \emph{universal coacting bialgebra of} $P$.
\end{theorem}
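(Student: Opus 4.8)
The plan is to exploit the universal property of $({\mathcal P}(P), \eta_{P})$ recorded in \coref{initialobj}, invoking its uniqueness clause three times: once to produce the comparison morphism, once to check compatibility with the counit, and once for the comultiplication. Note first that by \prref{bialgebra} the pair $({\mathcal P}(P), \eta_{P})$ is genuinely an object of ${\rm CoactBialg}_P$, so the claim is that it is initial.

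First I would fix an arbitrary object $(H, \rho_{P}) \in {\rm CoactBialg}_P$, so that $H$ is a commutative bialgebra and $\rho_{P} \colon P \to P \ot H$ is a Poisson algebra homomorphism which is simultaneously a right $H$-comodule structure on $P$. Since $H$ is in particular a commutative algebra, \coref{initialobj} supplies a \emph{unique} algebra homomorphism $\theta \colon {\mathcal P}(P) \to H$ with $\rho_{P} = ({\rm Id}_{P} \ot \theta) \circ \eta_{P}$. This immediately settles uniqueness at the level of the category: any morphism $f$ in ${\rm CoactBialg}_P$ from $({\mathcal P}(P), \eta_{P})$ to $(H, \rho_{P})$ is by definition an algebra map satisfying $({\rm Id}_{P} \ot f) \circ \eta_{P} = \rho_{P}$, whence $f = \theta$. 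It remains only to prove that $\theta$ is in fact a bialgebra map, which is the crux of the argument.

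For the counit, both $\varepsilon_{H} \circ \theta$ and $\varepsilon_{{\mathcal P}(P)}$ are algebra maps ${\mathcal P}(P) \to k$. Using the comodule counit axiom $({\rm Id}_{P} \ot \varepsilon_{H}) \circ \rho_{P} = {\rm can}$ together with $\rho_{P} = ({\rm Id}_{P} \ot \theta) \circ \eta_{P}$, I would compute $({\rm Id}_{P} \ot (\varepsilon_{H} \circ \theta)) \circ \eta_{P} = {\rm can}$, while \equref{epsilo} gives $({\rm Id}_{P} \ot \varepsilon_{{\mathcal P}(P)}) \circ \eta_{P} = {\rm can}$. The uniqueness clause of \coref{initialobj} applied with $A = k$ then forces $\varepsilon_{H} \circ \theta = \varepsilon_{{\mathcal P}(P)}$.

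The comultiplication is the main obstacle, and here the commutativity of $H$ is essential: it guarantees that $H \ot H$ is again a commutative algebra, so that \coref{initialobj} is available with $A = H \ot H$. Both $\Delta_{H} \circ \theta$ and $(\theta \ot \theta) \circ \Delta$ are algebra maps ${\mathcal P}(P) \to H \ot H$, and I would show they induce the same Poisson algebra map after precomposition with $\eta_{P}$. On the one hand $({\rm Id}_{P} \ot (\Delta_{H} \circ \theta)) \circ \eta_{P} = ({\rm Id}_{P} \ot \Delta_{H}) \circ \rho_{P}$; on the other, rewriting $({\rm Id}_{P} \ot \Delta) \circ \eta_{P} = (\eta_{P} \ot {\rm Id}_{{\mathcal P}(P)}) \circ \eta_{P}$ via \equref{delta} yields $({\rm Id}_{P} \ot ((\theta \ot \theta) \circ \Delta)) \circ \eta_{P} = (\rho_{P} \ot {\rm Id}_{H}) \circ \rho_{P}$. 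These two agree precisely by the coassociativity axiom $({\rm Id}_{P} \ot \Delta_{H}) \circ \rho_{P} = (\rho_{P} \ot {\rm Id}_{H}) \circ \rho_{P}$ of the $H$-comodule $P$; evaluating on the basis $\{e_{i}\}$ makes this transparent, since both sides reduce to $\sum_{s, t} e_{t} \ot \theta(x_{ts}) \ot \theta(x_{si})$. Uniqueness in \coref{initialobj} then gives $\Delta_{H} \circ \theta = (\theta \ot \theta) \circ \Delta$. Consequently $\theta$ is a bialgebra homomorphism compatible with the coactions, i.e. a morphism in ${\rm CoactBialg}_P$, and being the unique such, it exhibits $({\mathcal P}(P), \eta_{P})$ as the initial object.
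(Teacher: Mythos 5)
Your proposal is correct and follows essentially the same route as the paper: obtain $\theta$ from \coref{initialobj}, then apply the uniqueness clause of that corollary with $A = k$ and $A = H \ot H$ (the latter being commutative since $H$ is) to force $\varepsilon_{H} \circ \theta = \varepsilon$ and $\Delta_{H} \circ \theta = (\theta \ot \theta) \circ \Delta$, the key computation being that both candidate maps to $H \ot H$ induce the two sides of the comodule coassociativity axiom after precomposition with $\eta_{P}$. The only cosmetic difference is that you state the final identification $({\rm Id}_{P} \ot \Delta_{H}) \circ \rho_{P} = (\rho_{P} \ot {\rm Id}_{H}) \circ \rho_{P}$ up front, whereas the paper unwinds the same chain of equalities step by step.
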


\begin{proof} The statement of the theorem comes down to showing that for any commutative bialgebra $B$ and any Poisson algebra
homomorphism $f \colon P \to P \otimes B$
which makes $P$ into a right $B$-comodule there exists
a unique bialgebra homomorphism $\theta \colon {\mathcal P}
(P) \to B$ such that the following diagram is
commutative:
\begin{eqnarray} \eqlabel{univbialg}
\xymatrix {& P \ar[r]^-{\eta_{P}} \ar[dr]_-{f
} & {P \ot {\mathcal P} (P )} \ar[d]^{ {\rm Id}_{P} \ot \theta }\\
& {}  & {P \ot B} }
\end{eqnarray}
To start with, using \coref{initialobj}, we obtain a unique algebra homomorphism $\theta
\colon {\mathcal P} (P) \to B$ such that diagram~\equref{univbialg} commutes. The proof will be finished
once we show that $\theta$ is a coalgebra homomorphism as well.
This follows by using again \coref{initialobj}. Indeed, we obtain a unique algebra
homomorphism $\psi \colon {\mathcal P} (P) \to B \otimes B$ such that the following holds:
\begin{equation}\eqlabel{101}
({\rm Id}_{P} \otimes \psi) \circ \eta_{P} = \bigl({\rm Id}_{P}\otimes\,
\Delta_{B} \circ \theta\bigl)\circ \eta_{P}
\end{equation}
Obviously the algebra homomorphism $\Delta_{B} \circ \theta \colon {\mathcal P} (P) \to B \otimes B$ fulfills the above compatibility. The proof will be finished once we show that $(\theta \otimes
\theta) \circ \Delta \colon {\mathcal P} (P) \to B \otimes B$ fulfills the same compatibility.
Indeed, as  $f \colon
P \to P \otimes B$ is a right $B$-comodule
structure, we have:
\begin{eqnarray*}
\bigl({\rm Id}_{P} \otimes\, (\theta \otimes \theta) \circ \Delta\bigl)\circ \,\eta_{P} &=& \bigl({\rm Id}_{P} \otimes \theta \otimes \theta \bigl)\circ \underline{\bigl({\rm Id}_{P} \otimes \Delta\bigl)\circ \, \eta_{P}}\\
&\stackrel{\equref{delta}} {=}& \bigl({\rm Id}_{P} \otimes \theta \otimes \theta \bigl)\circ (\eta_{P} \otimes {\rm Id}_{{\mathcal P} (P)})\circ \eta_{P}\\
&=& \bigl(\underline{({\rm Id}_{P} \otimes \theta) \circ \eta_{P}}\ \otimes \theta\bigl)\circ \, \eta_{P}\\\
&\stackrel{\equref{univbialg}} {=}& \bigl(f \otimes \theta\bigl)\circ \, \eta_{P}\\
&=& (f \otimes {\rm Id}_{B})\circ \underline{({\rm Id}_{P} \otimes \theta) \circ \, \eta_{P}}\\
&\stackrel{\equref{univbialg}} {=}& \underline{(f \otimes  {\rm Id}_{B})\circ f}\\
&=& ({\rm Id}_{P} \otimes \Delta_{B}) \circ \underline{f}\\
&\stackrel{\equref{univbialg}} {=}& ({\rm Id}_{P} \otimes \Delta_{B}) \circ ({\rm Id}_{P} \otimes \theta) \circ \eta_{P}\\
&=& ({\rm Id}_{P} \otimes \Delta_{B} \circ \theta) \circ \eta_{P}
\end{eqnarray*}
as desired. Similarly, one can show that  $\varepsilon_B \, \circ \, \theta = \varepsilon$ and the proof is now finished.
\end{proof}

By considering Takeuchi's commutative Hopf envelope \cite{T} of the bialgebra ${\mathcal P} (P)$ we obtain, using \thref{univbialg}, the following:

\begin{corollary} \colabel{unihopf}
Let $P$ be a finite dimensional Poisson algebra. Then the category ${\rm CoactHopf}_P$ consisting of all commutative Hopf algebras coacting on $P$
has an initial object $\bigl({\mathcal H} (P), \, \lambda_{P}\bigl)$ and we call it the \emph{universal coacting Hopf algebra} of $P$.
\end{corollary}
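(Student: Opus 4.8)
The plan is to obtain ${\mathcal H}(P)$ by applying Takeuchi's commutative Hopf envelope to the universal coacting bialgebra ${\mathcal P}(P)$ of \thref{univbialg}, and then to transport initiality along the corresponding adjunction. Recall that Takeuchi's construction \cite{T} furnishes a left adjoint $T \colon {\rm ComBiAlg}_k \to {\rm ComHopf}_k$ to the forgetful functor ${\rm ComHopf}_k \to {\rm ComBiAlg}_k$: for each commutative bialgebra $B$ there is a bialgebra homomorphism $\iota_B \colon B \to T(B)$ into a commutative Hopf algebra which is universal, meaning that any bialgebra map from $B$ to a commutative Hopf algebra factors uniquely through $\iota_B$. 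I would set ${\mathcal H}(P) := T({\mathcal P}(P))$ and define the candidate coaction by $\lambda_P := ({\rm Id}_P \ot \iota_{{\mathcal P}(P)}) \circ \eta_P$. As a composite of the Poisson algebra homomorphism $\eta_P$ with ${\rm Id}_P \ot \iota_{{\mathcal P}(P)}$ (which is a Poisson algebra map by the functoriality of $P \ot -$), $\lambda_P$ is a Poisson algebra homomorphism; and since $\iota_{{\mathcal P}(P)}$ respects comultiplication and counit while $\eta_P$ is already a right comodule algebra structure by \prref{bialgebra}, a routine diagram chase shows that $\lambda_P$ equips $P$ with a right Poisson ${\mathcal H}(P)$-comodule algebra structure, so $({\mathcal H}(P), \lambda_P) \in {\rm CoactHopf}_P$.

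For initiality, let $(H, \rho_P)$ be an arbitrary object of ${\rm CoactHopf}_P$. Forgetting the antipode, $H$ is a commutative bialgebra coacting on $P$, hence $(H, \rho_P)$ is an object of ${\rm CoactBialg}_P$, and \thref{univbialg} produces a unique bialgebra homomorphism $\theta \colon {\mathcal P}(P) \to H$ with $({\rm Id}_P \ot \theta) \circ \eta_P = \rho_P$. By the universal property of $\iota_{{\mathcal P}(P)}$, this $\theta$ factors uniquely as $\theta = \bar\theta \circ \iota_{{\mathcal P}(P)}$ for a Hopf algebra homomorphism $\bar\theta \colon {\mathcal H}(P) \to H$ --- here one uses the standard fact that a bialgebra map between Hopf algebras automatically preserves antipodes. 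That $\bar\theta$ is a morphism in ${\rm CoactHopf}_P$ then follows from
\begin{eqnarray*}
({\rm Id}_P \ot \bar\theta) \circ \lambda_P = ({\rm Id}_P \ot \bar\theta) \circ ({\rm Id}_P \ot \iota_{{\mathcal P}(P)}) \circ \eta_P = ({\rm Id}_P \ot \theta) \circ \eta_P = \rho_P.
\end{eqnarray*}

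It remains to check uniqueness. If $\psi \colon {\mathcal H}(P) \to H$ is another Hopf algebra homomorphism with $({\rm Id}_P \ot \psi) \circ \lambda_P = \rho_P$, then $\psi \circ \iota_{{\mathcal P}(P)} \colon {\mathcal P}(P) \to H$ is a bialgebra map satisfying $({\rm Id}_P \ot (\psi \circ \iota_{{\mathcal P}(P)})) \circ \eta_P = \rho_P$; by the uniqueness clause of \thref{univbialg} it must equal $\theta = \bar\theta \circ \iota_{{\mathcal P}(P)}$, and then the uniqueness in Takeuchi's universal property forces $\psi = \bar\theta$. This shows $({\mathcal H}(P), \lambda_P)$ is the initial object of ${\rm CoactHopf}_P$.

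Since the whole argument is a formal composition of two universal properties, I expect no genuinely difficult computation. The only points requiring attention are the verification of the comodule algebra axioms for $\lambda_P$ (which reduce to coassociativity and counitality of $\eta_P$ together with $\iota_{{\mathcal P}(P)}$ being a coalgebra map) and the invocation that bialgebra maps between Hopf algebras commute with antipodes, which is precisely what lets $\bar\theta$ and $\psi$ be regarded as Hopf algebra morphisms. The one genuinely substantial ingredient, namely the existence of the commutative Hopf envelope, is supplied by \cite{T}.
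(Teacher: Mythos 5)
Your proposal is correct and follows essentially the same route as the paper: both take ${\mathcal H}(P)$ to be Takeuchi's commutative Hopf envelope $L({\mathcal P}(P))$ with coaction $\lambda_P = ({\rm Id}_P \ot \mu_{{\mathcal P}(P)}) \circ \eta_P$, where $\mu_{{\mathcal P}(P)}$ is the unit of the adjunction (your $\iota_{{\mathcal P}(P)}$), and transport initiality from \thref{univbialg} along that adjunction. The paper leaves the verification implicit, citing the analogous Lie-algebra argument, whereas you correctly spell out the factorization, the comodule axioms for $\lambda_P$, and the uniqueness check.
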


\begin{proof}
Indeed, the forgetful functor $U \colon  {\rm ComHopf}_k \to {\rm ComBiAlg}_k$ from the category of commutative Hopf algebras
to the category of commutative bialgebras has a left adjoint $L \colon {\rm ComBiAlg}_k \to {\rm ComHopf}_k$ (\cite[Theorem 65, (2)]{T}).
If we denote by $\mu \colon 1_{{\rm ComBiAlg}_k} \to UL$ the unit of the adjunction $L \dashv U$, then we can easily prove, in the
spirit of \cite[Theorem 2.13]{am20}, that the pair $\bigl({\mathcal H} (P) := L({\mathcal P} (P)), \, \lambda_{P} := ({\rm Id}_{P} \ot \mu_{{\mathcal P} (P)}) \, \circ \,  \eta_{P}\bigl)$ is the initial object in the category ${\rm CoactHopf}_P$ of all
commutative Hopf algebras coacting on $P$.
\end{proof}

\begin{remark} \relabel{hopfenv}
The dual versions of \thref{univbialg} and \coref{unihopf} regarding the actions of commutative bialgebras (resp. Hopf algebras) on a Poisson algebra also hold.
For a Poisson algebra $P$, we can define the category ${\rm ActBialg}_P$ (resp. ${\rm ActHopf}_P$) of all commutative bialgebras (respectively Hopf algebras) which act on $P$. More precisely, the objects of  ${\rm ActBialg}_P$  (resp. ${\rm ActHopf}_P$) are pairs $(B, \, \mu_P)$ consisting of a commutative bialgebra (resp. Hopf algebra) $B$ and a linear map $\mu_P \colon P \ot B \to P$, such that $(P, \mu_P)$ is a (right) \emph{Poisson $B$-module algebra}, i.e. $\mu_P$ is a (right) $B$-module structure on $P$ as well as a Poisson algebra map. Using the same arguments as in \cite[Theorem 4.14]{AGV2}, we can prove that ${\rm ActBialg}_P$ (resp. ${\rm ActHopf}_P$) has a final object.
\end{remark}

Next we will present two important applications of the bialgebra ${\mathcal P} (P)$. These are the Poisson algebra version of similar results obtained
for Lie/associative algebras in \cite{am20, mil}. First, recall the well known fact that for any bialgebra
$H$, we have $G(H^{\rm o}) = {\rm Hom}_{\rm Alg_k} (H, \,  k)$, the set of all algebra homomorphisms $H\to k$ (see (\cite[pag. 62]{radford})).

\begin{theorem} \thlabel{automorf}
Let $P$ be a finite dimensional Poisson algebra with
basis $\{e_1, \cdots, e_n\}$ and $U\bigl (G\bigl( {\mathcal P}
(P)^{\rm o} \bigl)\bigl)$ the group of all
invertible group-like elements of the finite dual ${\mathcal P}
(P)^{\rm o}$. Then the map defined for any $\theta \in
U\bigl(G\bigl( {\mathcal P} (P)^{\rm o} \bigl)\bigl)$
and $i = 1, \cdots, n$ by:
\begin{equation} \eqlabel{izomono}
\overline{\gamma} : U \bigl(G\bigl( {\mathcal P}
(P)^{\rm o} \bigl) \bigl) \to {\rm Aut}_{{\rm Poiss}}
(P), \qquad \overline{\gamma} (\theta) (e_i) :=
\sum_{s=1}^n \, \theta(x_{si}) \, e_s
\end{equation}
is an isomorphism of groups.
\end{theorem}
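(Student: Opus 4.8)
The plan is to recognize $\overline{\gamma}$ as nothing but the bijection of \coref{morP} restricted to group-like elements, and then to upgrade that set-theoretic bijection to an isomorphism of monoids, which on units yields the desired isomorphism of groups.

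First I would assemble the two inputs. On one hand, for any bialgebra $H$ one has the identification $G(H^{\rm o}) = {\rm Hom}_{\rm Alg_k}(H, \, k)$, recalled just before the statement, and $G(H^{\rm o})$ is a monoid under the convolution product inherited from $H^{\rm o}$, with unit the counit $\varepsilon_H$; here $H = {\mathcal P}(P)$. On the other hand, \coref{morP} applied with $Q := P$ gives a bijection
$$
\gamma : {\rm Hom}_{\rm Alg_k}({\mathcal P}(P), \, k) \to {\rm Hom}_{\rm Poiss_k}(P, \, P), \qquad \gamma(\theta) = ({\rm Id}_P \ot \theta)\circ \eta_{P}.
$$
Unwinding $\eta_{P}(e_i) = \sum_s e_s \ot x_{si}$ from \equref{unitadj2} under the identification $P \ot k \cong P$ shows $\gamma(\theta)(e_i) = \sum_s \theta(x_{si})\, e_s$, i.e. $\gamma$ is exactly the map $\overline{\gamma}$, and its codomain ${\rm Hom}_{\rm Poiss_k}(P, \, P) = {\rm End}_{\rm Poiss}(P)$ is a monoid under composition. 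Thus $\overline{\gamma}$ is \emph{a priori} bijective; it remains only to check that it is multiplicative.

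The central computation is that $\overline{\gamma}$ converts convolution into composition. Using the comultiplication $\Delta(x_{ij}) = \sum_s x_{is} \ot x_{sj}$ from \prref{bialgebra}, for $\theta_1, \theta_2 \in G({\mathcal P}(P)^{\rm o})$ one gets $(\theta_1 \star \theta_2)(x_{ij}) = \sum_s \theta_1(x_{is})\theta_2(x_{sj})$, whence
$$
\overline{\gamma}(\theta_1 \star \theta_2)(e_j) = \sum_{i}\Bigl(\sum_s \theta_1(x_{is})\theta_2(x_{sj})\Bigr) e_i = \sum_s \theta_2(x_{sj})\,\overline{\gamma}(\theta_1)(e_s) = \bigl(\overline{\gamma}(\theta_1)\circ \overline{\gamma}(\theta_2)\bigr)(e_j).
$$
Together with $\varepsilon(x_{ij}) = \delta_{i,j}$, which gives $\overline{\gamma}(\varepsilon) = {\rm Id}_P$, this shows that $\overline{\gamma}$ is a homomorphism of monoids, hence an isomorphism of monoids $G({\mathcal P}(P)^{\rm o}) \cong {\rm End}_{\rm Poiss}(P)$.

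Finally I would pass to units. Any isomorphism of monoids carries the group of invertible elements of the source bijectively onto the group of invertible elements of the target. The units of $\bigl(G({\mathcal P}(P)^{\rm o}), \, \star\bigr)$ are by definition $U\bigl(G({\mathcal P}(P)^{\rm o})\bigr)$, while the units of $\bigl({\rm End}_{\rm Poiss}(P), \, \circ\bigr)$ are precisely the bijective Poisson endomorphisms, i.e. ${\rm Aut}_{{\rm Poiss}}(P)$; hence $\overline{\gamma}$ restricts to a group isomorphism between them, as claimed. I expect the only genuine work to be the multiplicativity identity displayed above, which rests squarely on \prref{bialgebra}; everything else is bookkeeping. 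The one point I would state carefully is that invertibility in the convolution monoid corresponds, through the monoid isomorphism, exactly to invertibility of the associated Poisson endomorphism, so no separate argument is needed to verify that the image of $\overline{\gamma}$ lands in automorphisms.
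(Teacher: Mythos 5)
Your proposal is correct and follows exactly the paper's route: apply \coref{morP} with $Q:=P$ to get the set bijection, identify ${\rm Hom}_{\rm Alg_k}({\mathcal P}(P),k)$ with $G({\mathcal P}(P)^{\rm o})$, show $\gamma$ is a monoid isomorphism from the convolution monoid to $({\rm End}_{\rm Poiss}(P),\circ)$, and restrict to units. The only difference is that you carry out the multiplicativity computation explicitly (correctly, using $\Delta(x_{ij})=\sum_s x_{is}\ot x_{sj}$ and $\varepsilon(x_{ij})=\delta_{i,j}$), whereas the paper delegates that step to the analogous Lie-algebra result in \cite{am20}.
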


\begin{proof}
Using \coref{morP} for $Q:= P$ yields the bijective map
$$
\gamma : {\rm Hom}_{\rm Alg_k} ({\mathcal P} (P) , \,
k) \to {\rm End}_{{\rm Poiss}} (P), \quad \gamma (\theta)
= \bigl( {\rm Id}_{P} \ot \theta \bigl) \circ
\eta_{P}
$$
Furthermore, as discussed above we have $ {\rm Hom}_{\rm Alg_k} ({\mathcal P}
(P) , k) = G\bigl( {\mathcal P} (P)^{\rm o}
\bigl)$ and based on \equref{unitadj2} it follows easily that $\gamma$ takes the form given in \equref{izomono}. We denote by $\overline{\gamma}$ the restriction
of $\gamma$ to the invertible elements of the two monoids where the monoid structure on
${\rm End}_{{\rm Poiss}} (P)$ is given by the usual
composition of endomorphisms while $G\bigl( {\mathcal P} (P)^{\rm o}
\bigl)$ is a monoid with respect to the convolution product, i.e.
\begin{equation}\eqlabel{convolut}
(\theta_1 \star \theta_2) (x_{sj}) = \sum_{t=1}^n \,
\theta_1(x_{st}) \theta_2(x_{tj})
\end{equation}
for all $\theta_1$, $\theta_2 \in G\bigl( {\mathcal P}
(P)^{\rm o} \bigl)$ and $j$, $s = 1, \cdots, n$. Therefore, the proof will be finished by showing that $\gamma$ is a monoid
isomorphism and this can be shown exactly as in \cite[Theorem 3.1]{am20}.
\end{proof}

Next, for a given abelian group $G$, we describe all $G$-gradings on a Poisson algebra $P$.

\begin{proposition}\prlabel{graduari}
Let $G$ be an abelian group and $P$ a finite dimensional Poisson algebra. There exists a bijection between
the set of all $G$-gradings on $P$ and the set of all
bialgebra homomorphisms ${\mathcal P} (P) \to k[G]$ given such that the $G$-grading on $P
= \oplus_{\sigma \in G} \, P_{\sigma}^{(\theta)} $
associated to a bialgebra map $\theta: {\mathcal P} (P)
\to k[G]$ can be described as follows:
\begin{equation}\eqlabel{gradass}
P_{\sigma}^{(\theta)} := \{ x \in  P \, | \,
\bigl({\rm Id}_{P} \ot \theta \bigl) \, \circ \,
\eta_{P} (x) = x \ot \sigma  \}
\end{equation}
for all $\sigma \in G$.
\end{proposition}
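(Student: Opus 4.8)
The plan is to exhibit the desired bijection as the composite of two bijections that are, for the most part, already at our disposal. First, I would recall from \exref{nouexnev}(1) that, since $G$ is abelian, the group algebra $k[G]$ is a commutative bialgebra and there is a bijection between the set of all right Poisson $k[G]$-comodule algebra structures $\rho \colon P \to P \ot k[G]$ on $P$ and the set of all $G$-gradings $P = \oplus_{\sigma \in G} \, P_{\sigma}$ on $P$, characterised by the rule that $x \in P_{\sigma}$ if and only if $\rho(x) = x \ot \sigma$. Equivalently, the coaction $\rho$ corresponds to the grading whose homogeneous components are $P_{\sigma} = \{x \in P \, | \, \rho(x) = x \ot \sigma\}$.

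Second, I would invoke the universal property established in \thref{univbialg}. As $\bigl( {\mathcal P} (P), \, \eta_{P} \bigr)$ is the initial object of the category ${\rm CoactBialg}_P$ and $k[G]$ is a commutative bialgebra, for every right Poisson $k[G]$-comodule algebra structure $\rho$ on $P$ there exists a unique bialgebra homomorphism $\theta \colon {\mathcal P} (P) \to k[G]$ satisfying $\bigl( {\rm Id}_{P} \ot \theta \bigr) \circ \eta_{P} = \rho$. Conversely, for an arbitrary bialgebra homomorphism $\theta \colon {\mathcal P} (P) \to k[G]$, the composite $\bigl( {\rm Id}_{P} \ot \theta \bigr) \circ \eta_{P}$ is again a right Poisson $k[G]$-comodule algebra structure on $P$, being the pushforward of the coaction $\eta_{P}$ along the bialgebra map $\theta$. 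These two assignments are mutually inverse, so $\theta \mapsto \bigl( {\rm Id}_{P} \ot \theta \bigr) \circ \eta_{P}$ is a bijection between ${\rm Hom}_{{\rm ComBiAlg}_k} \bigl( {\mathcal P} (P), \, k[G] \bigr)$ and the set of all right Poisson $k[G]$-comodule algebra structures on $P$.

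Composing these two bijections produces the asserted correspondence between bialgebra homomorphisms ${\mathcal P} (P) \to k[G]$ and $G$-gradings on $P$. To recover the explicit formula \equref{gradass}, I would substitute $\rho = \bigl( {\rm Id}_{P} \ot \theta \bigr) \circ \eta_{P}$ into the description $P_{\sigma} = \{x \in P \, | \, \rho(x) = x \ot \sigma\}$ furnished by \exref{nouexnev}(1), which yields precisely $P_{\sigma}^{(\theta)} = \{x \in P \, | \, \bigl( {\rm Id}_{P} \ot \theta \bigr) \circ \eta_{P}(x) = x \ot \sigma\}$, as claimed.

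The argument is essentially formal once the two input results are in hand, so no genuine analytic difficulty arises. The only points demanding honest (if routine) checking are that the pushforward $\bigl( {\rm Id}_{P} \ot \theta \bigr) \circ \eta_{P}$ of a Poisson comodule algebra structure along a bialgebra map is once more a Poisson comodule algebra structure --- this uses that $\theta$ is simultaneously an algebra and a coalgebra map, together with the fact that $\eta_{P}$ is itself a Poisson comodule algebra structure --- and the careful unwinding of the grading/coaction dictionary of \exref{nouexnev}(1). The main care I anticipate is bookkeeping: keeping track of the direction in which each bijection runs, so that the composite and the explicit description \equref{gradass} are stated consistently.
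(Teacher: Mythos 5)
Your proposal is correct and follows essentially the same route as the paper's own proof: compose the bijection from \thref{univbialg} (applied to $B := k[G]$) between bialgebra maps ${\mathcal P}(P) \to k[G]$ and right Poisson $k[G]$-comodule algebra structures on $P$ with the dictionary of \exref{nouexnev} between such coactions and $G$-gradings. The paper states this in two sentences; your additional remarks on checking that the pushforward along $\theta$ is again a comodule algebra structure and on unwinding the formula \equref{gradass} are just the routine details the paper leaves implicit.
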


\begin{proof} \thref{univbialg} applied for the
commutative bialgebra $B := k[G]$ yields a bijection between the
set of all bialgebra homomorphisms ${\mathcal P} (P)
\to k[G]$ and the set of all Poisson algebra homomorphisms $f
\colon P \to P \otimes k[G]$ which make
$P$ into a right $k[G]$-comodule. The proof is now finished since we have shown in \exref{nouexnev} that the latter set is in
bijective correspondence with the set of all $G$-gradings on the Poisson algebra $P$.
\end{proof}

Our next aim is to classify all $G$-gradings on a Poisson algebra $P$. To this end, we introduce the following:

\begin{definition}\delabel{conjug}
Let $G$ be an abelian group and $P$ a finite
dimensional Poisson algebra. Two homomorphisms of bialgebras $\theta_1,
\theta_2: {\mathcal P} (P) \to k[G]$ are called
\emph{conjugate} and denote this by $\theta_1 \approx \theta_2$, if there
exists $g \in U\bigl (G\bigl( {\mathcal P} (P)^{\rm o}
\bigl)\bigl)$ an invertible group-like element of the finite dual
${\mathcal P} (P)^{\rm o}$ such that $\theta_2 = g
\star \theta_1 \star g^{-1}$, in the convolution algebra ${\rm
Hom} \bigl( {\mathcal P} (P) , \, k[G] \bigl)$.
\end{definition}

Throughout, ${\rm Hom}_{\rm BiAlg} \, \bigl( {\mathcal P}
(P) , \, k[G] \bigl)/\approx $ will denote the quotient set of the
set of all bialgebra homomorphisms ${\mathcal P} (P) \to k[G]$
by the above equivalence relation and let $\hat{\theta}$ denote the
equivalence class of $\theta \in {\rm Hom}_{\rm BiAlg} \, \bigl(
{\mathcal P} (P) , \, k[G] \bigl)$. The next theorem
classifies all $G$-gradings on a Poisson algebra $P$.

\begin{theorem} \thlabel{nouaclas}
Let $G$ be an abelian group, $P$ a finite dimensional
Poisson algebra and $G$-${\rm \textbf{gradings}}(P)$ the
set of isomorphism classes of all $G$-gradings on $P$. Then the map
$$
{\rm Hom}_{\rm BiAlg} \, \bigl( {\mathcal P} (P) , \,
k[G] \bigl)/\approx  \,\,\, \mapsto \,\, G{\rm-\textbf{gradings}}
(P), \qquad \hat{\theta} \mapsto
P^{(\theta)} := \oplus_{\sigma \in G} \,
P_{\sigma}^{(\theta)}
$$
where $P_{\sigma}^{(\theta)} = \{ x \in P
\, | \, \bigl({\rm Id}_{P} \ot \theta \bigl) \, \circ
\, \eta_{P} (x) = x \ot \sigma  \}$, for all $\sigma
\in G$, is bijective.
\end{theorem}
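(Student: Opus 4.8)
The plan is to derive the statement from \prref{graduari}, which already produces a bijection $\theta \mapsto P^{(\theta)}$ between all bialgebra homomorphisms ${\mathcal P}(P) \to k[G]$ and all $G$-gradings on $P$. Since that map is a genuine bijection, it descends to a bijection of the two quotient sets exactly when the equivalence relations correspond, i.e. when
\[
\theta_1 \approx \theta_2 \quad \Longleftrightarrow \quad P^{(\theta_1)} \cong P^{(\theta_2)},
\]
where $\approx$ is conjugacy in the sense of \deref{conjug} and $\cong$ is isomorphism of $G$-gradings as defined in \exref{nouexnev}. This equivalence is the entire content of the theorem, and everything reduces to comparing, for an invertible group-like $g$, the convolution conjugation $\theta \mapsto g \star \theta \star g^{-1}$ with the twisting of the associated $k[G]$-comodule structure by the automorphism $\overline{\gamma}(g)$ furnished by \thref{automorf}.

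For a linear map $\theta \colon {\mathcal P}(P) \to k[G]$ write $f_\theta := ({\rm Id}_P \ot \theta) \circ \eta_{P} \colon P \to P \ot k[G]$, so that $f_\theta(e_i) = \sum_s e_s \ot \theta(x_{si})$ by \equref{unitadj2} and, for a bialgebra map $\theta$, one has $x \in P^{(\theta)}_\sigma$ iff $f_\theta(x) = x \ot \sigma$. The \emph{key lemma} I would establish is that for $g \in U\bigl(G\bigl({\mathcal P}(P)^{\rm o}\bigr)\bigr)$ with $w := \overline{\gamma}(g)$, and $\theta' := g \star \theta \star g^{-1}$, one has
\[
f_{\theta'} \circ w = (w \ot {\rm Id}_{k[G]}) \circ f_\theta .
\]
This is a direct basis computation: expanding $\theta'(x_{ps}) = \sum_{a,b} g(x_{pa})\,\theta(x_{ab})\,g^{-1}(x_{bs})$ by iterating $\Delta(x_{ij}) = \sum_s x_{is} \ot x_{sj}$ from \equref{deltaeps}, and substituting $w(e_i) = \sum_s g(x_{si})\,e_s$ from \equref{izomono}, the scalar matrices $(g(x_{si}))$ and $(g^{-1}(x_{si}))$ collapse through $g^{-1} \star g = \varepsilon$ (that is, $\sum_s g^{-1}(x_{bs}) g(x_{si}) = \varepsilon(x_{bi}) = \delta_{b,i}$, using \equref{convolut} and \equref{deltaeps}), and both $f_{\theta'}(w(e_i))$ and $(w \ot {\rm Id})(f_\theta(e_i))$ reduce to $\sum_{p,a} g(x_{pa})\,e_p \ot \theta(x_{ai})$. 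I also record that $\theta'$ is again an algebra map, being a convolution of algebra maps from the bialgebra ${\mathcal P}(P)$ into the commutative algebra $k[G]$.

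Granting the lemma, the forward implication is immediate: if $\theta_2 = g \star \theta_1 \star g^{-1}$ and $w = \overline{\gamma}(g)$, then for $x \in P^{(\theta_1)}_\sigma$ we get $f_{\theta_2}(w(x)) = (w \ot {\rm Id})(x \ot \sigma) = w(x) \ot \sigma$, so $w\bigl(P^{(\theta_1)}_\sigma\bigr) \subseteq P^{(\theta_2)}_\sigma$, with equality upon repeating the argument for $g^{-1}$; hence $w$ is an isomorphism of $G$-gradings. For the converse, suppose $w \in {\rm Aut}_{{\rm Poiss}}(P)$ satisfies $w\bigl(P^{(\theta_1)}_\sigma\bigr) = P^{(\theta_2)}_\sigma$ for all $\sigma$. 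Decomposing an arbitrary element along the grading shows this is equivalent to $f_{\theta_2} \circ w = (w \ot {\rm Id}) \circ f_{\theta_1}$. Writing $w = \overline{\gamma}(g)$ with $g \in U\bigl(G\bigl({\mathcal P}(P)^{\rm o}\bigr)\bigr)$ by \thref{automorf}, the lemma applied to $\theta_1$ gives $f_{g \star \theta_1 \star g^{-1}} \circ w = (w \ot {\rm Id}) \circ f_{\theta_1}$ as well; cancelling the invertible $w$ yields $f_{\theta_2} = f_{g \star \theta_1 \star g^{-1}}$. Reading off the $e_s$-components shows $\theta_2$ and $g \star \theta_1 \star g^{-1}$ agree on every generator $x_{si}$, and since both are algebra maps they coincide, so $\theta_1 \approx \theta_2$.

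Combining the two directions with the bijection of \prref{graduari} shows that $\hat\theta \mapsto \widehat{P^{(\theta)}}$ is well defined and injective, while surjectivity is automatic since every $G$-grading is already of the form $P^{(\theta)}$; hence the displayed map is bijective. The point demanding care — and the main obstacle — is the index bookkeeping in the key lemma, specifically arranging the scalar matrices $(g(x_{si}))$ and $(g^{-1}(x_{si}))$ to cancel in the correct order, which is precisely where the commutativity of $k[G]$ and the comultiplication \equref{deltaeps} of ${\mathcal P}(P)$ come into play.
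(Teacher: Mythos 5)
Your proposal is correct and follows essentially the same route as the paper: reduce via \prref{graduari} to showing that $P^{(\theta_1)}\cong P^{(\theta_2)}$ iff $\theta_1\approx\theta_2$, parametrize the Poisson automorphism $w$ by an invertible group-like $g$ via \thref{automorf}, and verify that the comodule-map condition on $w_g$ is exactly $g\star\theta_1=\theta_2\star g$. Your ``key lemma'' is precisely the paper's identity \equref{3002} rewritten in conjugation form, so the two arguments coincide.
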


\begin{proof}
Since the associative and Lie/Leibniz algebra counterparts of this result have been proved in detail in \cite[Theorem 3.4]{mil} and \cite[Theorem 3.5]{am20}, respectively, we will be brief. First, note that by \prref{graduari}, for any $G$-grading $P
= \oplus_{\sigma \in G} \, P_{\sigma}$ there exists a unique bialgebra homomorphism $\theta:
{\mathcal P} (P) \to k[G]$ such that
$P_{\sigma} = P_{\sigma}^{(\theta)}$, for all $\sigma \in G$. The proof will be finished once we show that any two $G$-gradings on $P$, say $P^{(\theta_1)}$ and $P^{(\theta_2)}$, associated to two bialgebra homomorphisms $\theta_1$,
$\theta_2 \colon {\mathcal P} (P) \to k[G]$, are isomorphic
if and only if $\theta_1 \approx  \theta_2$. Indeed, recall from \exref{nouexnev} that defining a $G$-grading on $P$ is
in one-to-one correspondence to defining a right
$k[G]$-comodule structure $\rho \colon P \to P \ot
k[G]$ on $P$ which is also a Poisson algebra homomorphism. Now two $G$-gradings
$P^{(\theta_1)}$ and $P^{(\theta_2)}$ are
isomorphic if and only if $(P, \, \rho^{(\theta_1)})$
and $(P, \, \rho^{(\theta_2)})$ are isomorphic both as algebras and as right $k[G]$-comodules; this comes down to the existence of an automorphism $w \colon P \to P$ of the Poisson algebra
$P$ such that $\rho^{(\theta_2)} \, \circ w = \bigl(w \ot
{\rm Id}_{k[G]}\bigl) \, \circ \rho^{(\theta_1)}$. By \thref{automorf}, for any Poisson algebra
automorphism $w : P \to P$ there exists a unique invertible group-like element of the finite dual $g \in U\bigl
(G\bigl( {\mathcal P} (P)^{\rm o} \bigl)\bigl)$ such that $w = w_g$ is given for any $i =
1, \cdots, n$ by
\begin{equation} \eqlabel{3001}
w_g (e_i) = \sum_{s=1}^n \, g(x_{si}) \, e_s
\end{equation}
where $\{e_1, \cdots, e_n\}$ is a basis in $P$. A straightforward computation shows that the Poisson algebra automorphism $w_g \colon P \to P$ is also a right
$k[G]$-comodule map if and only if the following holds:
\begin{equation} \eqlabel{3002}
\sum_{s=1}^n \, g(x_{as}) \theta_1 (x_{si}) = \sum_{s=1}^n \,
\theta_2 (x_{as}) g(x_{si})
\end{equation}
Having in mind that $\{x_{ai}\}_{a, i = 1, \cdots, n}$ is a system
of generators of ${\mathcal P} (P)$) we can easily conclude that \equref{3002} reduces to $g \star
\theta_1 = \theta_2 \star g$. This finishes the proof as $g\colon {\mathcal P}
(P) \to k$ is an invertible element in the convolution algebra ${\rm Hom} \bigl( {\mathcal P} (P) , \, k[G]
\bigl)$ which shows that  $\theta_1 \approx  \theta_2$.
\end{proof}

We will give now an explicit example which describes the initial object in the category of all commutative
bialgebras that coacts on a certain $3$-dimensional Poisson algebra.

\begin{example}
Let $P$ be the $3$-dimensional Poisson algebra with $k$-basis $\{e_1, \, e_2, \, e_3\}$ and Poisson algebra structure given by
$e_1 ^2 := e_2$, $\left[e_1, \, e_3 \right] := e_3$ (undefined multiplications and brackets are all zero). Then, there exists an isomorphism of bialgebras
\begin{eqnarray*}
{\mathcal P} (P) \, \cong \, k[X, Y, Z, T]/(T - XT)
\end{eqnarray*}
where the latter has the following bialgebra structure:
\begin{eqnarray*}
&& \Delta (\widehat{X}) = \widehat{X} \ot \widehat{X}, \qquad \varepsilon (\widehat{X}) = 1 \\
&& \Delta (\widehat{Y}) = \widehat{Y} \ot \widehat{X} + \widehat{X}^2 \ot \widehat{Y}, \qquad \varepsilon(\widehat{Y}) = 0 \\
&& \Delta (\widehat{Z}) = \widehat{Z} \ot \widehat{X} + \widehat{T} \ot \widehat{Z}, \qquad \varepsilon (\widehat{Z}) = 0 \\
&& \Delta (\widehat{T}) = \widehat{T} \ot \widehat{T}, \qquad \varepsilon (\widehat{T}) = 1
\end{eqnarray*}
The canonical coaction $\eta_{P} : P \to P \ot k[X, Y, Z, T]/(T - XT)$ of this bialgebra on $P$ is given by:
\begin{eqnarray*}
&& \eta_{P} (e_1) = e_1 \ot \widehat{X} + e_2 \ot \widehat{Y} + e_3 \ot \widehat{Z} \\
&& \eta_{P} (e_2) = e_2 \ot \widehat{X}^2, \qquad \eta_{P} (e_3) = e_3 \ot \widehat{T}.
\end{eqnarray*}
Indeed, note first that the only non-zero structure constants of $P$ are: $\tau_{1,1}^2 = 1$ and
$\mu_{1,3}^3 = 1 = - \mu_{3,1}^3$. Now, a careful analysis of the $54$ defining relations of ${\mathcal P}( P)$ arising from \equref{relatii2}, leads to the conclusion that after eliminating the redundant ones, we are left with the following:
\begin{eqnarray*}
&& x_{12} = 0, \quad x_{13} = 0, \quad x_{23} = 0, \quad x_{32} = 0, \quad
x_{22} = x_{11}^2, \quad x_{33} = x_{11} x_{33}.
\end{eqnarray*}
The conclusion now follows by denoting $\widehat{X} = x_{11}$, $\widehat{Y} = x_{21}$, $\widehat{Z} = x_{31}$ and $\widehat{T} = x_{33}$.
\end{example}

\end{document}